\numberwithin{equation}{section}
\theoremstyle{plain}
\newtheorem{theorem}{Theorem}[section]
\newtheorem{lemma}[theorem]{Lemma}
\newtheorem{proposition}[theorem]{Proposition}
\newtheorem{conjecture}[theorem]{Conjecture}
\theoremstyle{definition}
\newtheorem{example}[theorem]{Example}
\let\c@equation\c@theorem  % incorporate equation numbering
\newcommand{\DD}{\mathbb{D}}
\newcommand{\FF}{\mathbb{F}}
\newcommand{\HH}{\mathbb{H}}
\newcommand{\be}{\begin{enumerate}}
\newcommand{\ee}{\end{enumerate}}
\newcommand{\bq}{\begin{eqnarray*}}
\newcommand{\eq}{\end{eqnarray*}}
\newcommand{\bqn}{\begin{eqnarray}}
\newcommand{\eqn}{\end{eqnarray}}
\begin{document}
\title[Rigidity of down-up algebras]
{Rigidity of down-up algebras \\
with respect to finite group coactions}

\author{J. Chen, E. Kirkman and J.J. Zhang}

\address{Chen: School of Mathematical Science,
Xiamen University, Xiamen 361005, Fujian, China}

\email{chenjianmin@xmu.edu.cn}

\address{Kirkman: Department of Mathematics,
P. O. Box 7388, Wake Forest University, Winston-Salem, NC 27109}

\email{kirkman@wfu.edu}

\address{Zhang: Department of Mathematics, Box 354350,
University of Washington, Seattle, Washington 98195, USA}

\email{zhang@math.washington.edu}

\begin{abstract}
If $G$ is a nontrivial finite group coacting on a
graded noetherian down-up algebra $A$ inner
faithfully and homogeneously, then the fixed subring
$A^{co\; G}$ is not isomorphic to $A$. Therefore
graded noetherian down-up algebras are rigid with
respect to finite group coactions, in the sense of
Alev-Polo. An example is given to show that this 
rigidity under group coactions does not have all 
the same consequences as the rigidity under group 
actions.
\end{abstract}

\subjclass[2010]{16E10, 16W22}

% 16E10  View Publications  (1991-now) Homological dimension

%16W30 (1991-now) Coalgebras, bialgebras,
%      Hopf algebras [See also 16S40, 57T05];

%20J05 (1973-now) Homological methods in group theory

\keywords{Down-up algebra,
coaction, rigidity, Artin-Schelter regular algebra, homological determinant} 

%\thanks{the US National Science Foundation.}

\maketitle

%\tableofcontents

\setcounter{section}{-1}

\section{Introduction}
\label{xxsec0}

Throughout this paper, let $\Bbbk$ be a base field that
is algebraically closed of characteristic zero, and let all vector
spaces, (co)algebras, and morphisms be over $\Bbbk$.  

A remarkable theorem of Alev-Polo \cite[Theorem 1]{AP} states:

{\it Let ${\mathfrak g}$ and ${\mathfrak g}'$
be two semisimple Lie algebras. Let $G$ be a finite group of algebra
automorphisms of the universal enveloping algebra $U({\mathfrak g})$
such that the fixed subring $U({\mathfrak g})^G$ is isomorphic to 
$U({\mathfrak g}')$. Then $G$ is trivial and ${\mathfrak g}\cong 
{\mathfrak g}'$.}

Alev-Polo called this result a rigidity theorem for universal 
enveloping algebras. In addition, they proved a rigidity theorem for 
the Weyl algebras \cite[Theorem 2]{AP}. Kuzmanovich and the 
second- and third-named authors proved Alev-Polo's rigidity theorems 
in the graded case in \cite[Theorem 0.2 and Corollary 0.4]{KKZ1}.

 (Commutative) polynomial rings are not rigid; indeed,
by the classical Shephard-Todd-Chevalley Theorem if $G$ is a 
reflection group acting on a commutative polynomial ring $A$ 
then $A^G$ is isomorphic to $A$. Artin-Schelter
regular algebras \cite{AS} are considered to be a natural analogue of
polynomial rings in many respects. This paper concerns a
class of noncommutative Artin-Schelter regular algebras. The 
rigidity of a noncommutative algebra is closely related
to the lack of reflections in the noncommutative setting \cite{KKZ1}.
Therefore the rigidity of an algebra leads to a trivialization of the
Shephard-Todd-Chevalley theorem \cite{ST, KKZ2}, which is one of the key
results in noncommutative invariant theory \cite{Ki}.  The rigidity property is
also related to Watanabe's criterion for the Gorenstein property, see
\cite[Theorem 4.10]{KKZ3}. Some recent work in noncommutative algebraic
geometry connects the rigidity property and the lack of reflections to Auslander's
theorem \cite{BHZ}, which is one of the fundamental ingredients in the 
McKay correspondence \cite{CKWZ1, CKWZ2}. Further understanding of the rigidity 
property will have implications for several other research directions.

In \cite{KKZ5}, rigidity with respect to group coactions is
studied. Let $A$ be a connected ($\mathbb{N}$-)graded $\Bbbk$-algebra. 
A $G$-coaction on $A$ (preserving the ${\mathbb N}$-grading) is 
equivalent to a $G$-grading of $A$ (compatible with the original
${\mathbb N}$-grading), and the fixed subring $A^{co G}$ is $A_e$, 
the component of the unit element $e \in A$ under the $G$-grading. We 
recall a definition \cite[Definition 0.8]{KKZ5}:
 we say that a connected graded algebra $A$ is {\it rigid with
respect to group coactions} if for every nontrivial finite group $G$
coacting on $A$ homogeneously and inner faithfully, the fixed subring
$A^{co\; G}$ is not isomorphic to $A$ as algebras. The following
Artin-Schelter regular algebras are rigid with respect to group coactions
\cite[Theorem 0.9]{KKZ5}:
\begin{enumerate}
\item[(a)]
The homogenization of the universal enveloping algebra of a finite dimensional
semisimple Lie algebra.
\item[(b)]
The Rees ring of the Weyl algebra with respect to the standard
filtration.
\item[(c)]
The non-PI Sklyanin algebras of global dimension at least 3.
\end{enumerate}

These results can be viewed as dual versions of the rigidity theorems
proved in \cite[Theorem 0.2 and Corollary 0.4]{KKZ1}.

Down-up algebras were introduced by Benkart-Roby in \cite{BR}
as a tool to study the structure of certain posets. Graded
noetherian down-up algebras are Artin-Schelter regular algebras of
global dimension three with two generators \cite{KMP}. We 
recall the definition of only a graded noetherian down-up algebra.  For
$\alpha$ and $\beta$ scalars in $\Bbbk$, let the {\it down-up
algebra} $\DD(\alpha,\beta)$ be the algebra
generated by $u$ and $d$ and subject to the relations 
%\footnote{these are not the usual relations 
%(namely $du^2 = \alpha udu + \beta u^2 d$, etc.) 
%that lead naturally to the PBW basis of Lemma 1.1 (3), 
%but they are equivalent, and I don't think it is worth 
%making that change now}
\begin{align}
\label{E0.0.1}\tag{E0.0.1}
u^2 d &=\alpha udu +\beta d u^2,\\
\label{E0.0.2}\tag{E0.0.2}
u d^2 &=\alpha dud +\beta d^2 u.
\end{align}
When $\alpha=0$ we denote the down-up algebra $\DD(0, \beta)$ by 
$\DD_{\beta}$. In this
paper we always assume that $\beta\neq 0$, or equivalently,
$\DD(\alpha,\beta)$ is a graded noetherian Artin-Schelter
regular algebra of global dimension three. The groups of
algebra automorphisms of down-up algebras (which depend upon the values 
of $\alpha$ and $\beta$) were computed in
\cite{KK}. These groups are rich enough to provide many nontrivial
examples. Some invariant theoretic aspects of down-up
algebras have been studied in \cite{KK, KKZ4}.

There is a rigidity result concerning group actions on down-up
algebras, see \cite[Proposition 6.4]{KKZ1}. The only theorem in this
paper is the following rigidity result for group coactions on graded
noetherian down-up algebras.

\begin{theorem}
\label{xxthm0.1} Let $A$ be a graded noetherian down-up algebra
$\DD(\alpha,\beta)$ and let $G$ be a nontrivial finite group
coacting on $A$ inner faithfully and homogeneously. Then the
fixed subring $A^{co\; G}$ is not Artin-Schelter regular.
As a consequence, $A$ is rigid with respect to finite group
coactions.
\end{theorem}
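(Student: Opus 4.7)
\emph{Plan.}
The plan is to translate the coaction into a $G$-grading on $A$ compatible with the $\mathbb N$-grading, restrict attention to the two-dimensional generating space $A_1 = \Bbbk u \oplus \Bbbk d$, and in each resulting configuration show that the identity component $A_e = A^{co\; G}$ is not AS-regular.

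First, a homogeneous inner-faithful $G$-coaction on $A$ is equivalent to an inner-faithful $G$-grading $A = \bigoplus_{g \in G} A_g$ respecting the $\mathbb N$-grading, under which $A^{co\; G} = A_e$. Since $A = \DD(\alpha,\beta)$ is generated in degree one, the $G$-grading is determined by its restriction to $A_1$. Choosing a $G$-homogeneous basis $\{x,y\}$ of $A_1$ with $x \in A_g$ and $y \in A_h$, I would split into two cases: (I) $g = h$, so every $\mathbb N$-degree-$n$ element of $A$ lies in $G$-degree $g^n$, inner-faithfulness forces $G = \langle g \rangle$ to be cyclic of some order $m \geq 2$, and $A_e$ is the Veronese subalgebra $A^{(m)}$; and (II) $g \neq h$, where I would rewrite the defining relations \eqref{E0.0.1}--\eqref{E0.0.2} in the basis $\{x, y\}$ and demand that the resulting two-dimensional space of cubic relations be $G$-graded inside $A_1^{\otimes 3}$. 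Matching the $G$-degrees of $x^2 y, xyx, yx^2$ (and those coming from the second relation) forces $gh = hg$ when $\alpha \neq 0$, and at least the weaker commutations $g^2 h = h g^2$ and $g h^2 = h^2 g$ when $\alpha = 0$.

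Next I would verify in each surviving configuration that $A_e$ is not AS-regular. In case (I), using $H_A(t) = ((1-t)^2(1-t^2))^{-1}$ and the root-of-unity average $H_{A^{(m)}}(t) = \tfrac{1}{m} \sum_{k=0}^{m-1} H_A(\zeta^k t^{1/m})$ (with $\zeta$ a primitive $m$-th root of unity), for $m = 2$ one obtains $H_{A^{(2)}}(t) = (1+t)/(1-t)^3$, which is not of the form $1/p(t)$ for any polynomial $p$ and hence cannot be the Hilbert series of a noetherian AS-regular algebra; an analogous calculation handles $m \geq 3$. In case (II) with $\alpha \ne 0$, the commutation $gh = hg$ makes $G$ abelian; by character duality the coaction becomes an action of $\widehat G$, and I would apply \cite[Proposition 6.4]{KKZ1} together with the automorphism classification of \cite{KK} to conclude that $A_e$ is not AS-regular.

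The principal obstacle is the $\alpha = 0$ subcase of (II), in which the weaker conditions $g^2 h = h g^2$ and $g h^2 = h^2 g$ do not force $gh = hg$ and a priori permit genuinely non-abelian inner-faithful $G$-gradings of $\DD_\beta$. I expect to handle this by enumerating the few pairs $(g,h)$ satisfying these weaker commutations while generating an inner-faithful group, and then in each case verifying non-AS-regularity of $A_e$ by a direct Hilbert-series or minimal-presentation computation. Once every case is dealt with, the concluding assertion $A^{co\; G} \not\cong A$ is immediate since $A$ is AS-regular while $A_e$ is not.
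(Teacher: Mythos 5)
Your overall skeleton --- pass to a $G$-grading, pick a $G$-homogeneous basis $\{x,y\}$ of $A_1$, rewrite the cubic relations in that basis and force them to be $G$-homogeneous, and dispose of abelian $G$ by dualizing to a group action and citing \cite[Proposition 6.4]{KKZ1} --- is exactly the paper's reduction (Lemma \ref{xxlem1.3} and Proposition \ref{xxpro1.12}). But there are two genuine gaps, and they sit precisely where the paper does its real work. First, the claim that $\alpha\neq 0$ forces $gh=hg$ is false. For $\DD(-2,-1)$ the relations in a suitable homogeneous basis become $x^2y+yx^2-2y^3=0$ and $-2x^3+xy^2+y^2x=0$; homogeneity only forces $g^2h=hg^2=h^3$, i.e.\ $g^2=h^2$, and this is satisfied by the generators of every dihedral group $D_{2n}$ (the paper exhibits such inner-faithful non-abelian coactions in Lemma \ref{xxlem1.9}(3)). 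So a non-abelian case with $\alpha\neq 0$ survives your sieve, and ruling it out requires a genuinely new argument --- the paper uses the fact that each $G$-homogeneous component is free of rank one over $A^{co\,G}$ \cite[Theorem 0.3]{KKZ5} to derive a contradiction from two linearly independent elements of minimal degree in the component of the longest dihedral element (Proposition \ref{xxpro1.10}).

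Second, your plan for the $\alpha=0$ non-abelian case --- ``enumerating the few pairs $(g,h)$'' and then checking each fixed ring by a Hilbert-series or presentation computation --- cannot be carried out as stated. The conditions $g^2h=hg^2$ and $gh^2=h^2g$ (i.e.\ $g^2,h^2$ central) are satisfied by infinitely many non-isomorphic non-abelian groups on two generators, so there is no finite enumeration, and the identity components vary with $G$; a uniform Hilbert-series criterion is not available. The paper instead gives a single structural argument covering all such $G$ at once: assuming $A^{co\,G}$ is AS regular, it must be generated by at most three elements (Lemma \ref{xxlem1.2}, from \cite{AS} and \cite{Ste}), yet the four invariants $u^i$, $d^j$, $(du)^k$, $(ud)^l$ force at least four generators (Proposition \ref{xxpro1.4}). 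Moreover, for $\beta=-1$ there is an additional non-abelian configuration in which the homogeneous basis is $\{u+d,\,u-d\}$ rather than $\{u,d\}$ (the algebra $\FF$ of Lemma \ref{xxlem1.5}); this case requires the most delicate argument in the paper, a Diamond-Lemma normal form (Lemma \ref{xxlem1.6}) and a left-subword lemma (Lemma \ref{xxlem1.7}) to again produce four independent minimal generators. Your Veronese computation for the cyclic case is fine but redundant (it is subsumed by the abelian reduction); the substantive non-abelian cases are the ones your proposal leaves open.
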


One can also consider general Hopf algebra actions on
the down-up algebras. The following conjecture is reasonable.

\begin{conjecture}
\label{xxcon0.2}
The graded noetherian down-up algebras are rigid with respect
to semisimple Hopf algebra actions in the sense of
\cite[Remark 0.10]{KKZ5}.
\end{conjecture}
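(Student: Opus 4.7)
The plan is to elevate Theorem~\ref{xxthm0.1} and the group-action rigidity of \cite[Proposition~6.4]{KKZ1} from finite groups to arbitrary semisimple Hopf algebras, by treating $A^H$ as a putative AS-regular fixed ring and showing that no nontrivial semisimple Hopf algebra can play the role of a reflection Hopf algebra for $A = \DD(\alpha,\beta)$.

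First I would set up the reduction. Let $H$ be a semisimple Hopf algebra acting on $A$ inner faithfully and homogeneously, and suppose for contradiction that $A^H \cong A$. Then $A^H$ is AS-regular of global dimension three, so $H$ would be a ``reflection Hopf algebra'' for $A$ in the sense of \cite{KKZ5}. Since $A$ is generated in degree one, the whole $H$-action is determined by the two-dimensional $H$-module $V := A_1$, and the cubic relations (\ref{E0.0.1})--(\ref{E0.0.2}) must span an $H$-submodule of $V^{\otimes 3}$; inner faithfulness forces the induced map $H \to \End_\Bbbk(\bigoplus_n A_n)$ to be injective, so $H$ acts faithfully through $V$.

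Next, I would exploit that two special cases are already handled. If $H$ is cocommutative, then $H \cong \Bbbk G$ and the action is a $G$-action by graded automorphisms, handled by \cite[Proposition~6.4]{KKZ1}. If $H$ is commutative, then $H \cong \Bbbk^G$ and the action is dual to a $G$-coaction, handled by Theorem~\ref{xxthm0.1}. The genuinely new content of the conjecture lies in the semisimple Hopf algebras $H$ that are neither commutative nor cocommutative---examples include the Kac-Paljutkin algebra $H_8$ and more general Kac algebras. For each such $H$ acting inner faithfully through a two-dimensional module preserving some down-up relation, I would compute the Hilbert series of $A^H$ via the Hopf Molien formula and argue it cannot equal the down-up series $1/((1-t)^2(1-t^2))$. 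More precisely, if $\hdet_H \neq \epsilon$ then $A^H$ fails the Gorenstein symmetry required of an AS-regular algebra; if $\hdet_H = \epsilon$ then by the Hopf Watanabe-type criterion $A^H$ is only AS-Gorenstein, and its being AS-regular would require $H$ to be generated by ``quasi-reflections'' in the sense of \cite{KKZ1, KKZ5}---which, as the structure of the cubic down-up relations shows, does not occur for any nontrivial $H$.

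The main obstacle will be exactly this final exclusion of genuine semisimple Hopf quasi-reflections on $\DD(\alpha,\beta)$. In the group and dual-group cases one has the explicit descriptions of $\Aut(\DD(\alpha,\beta))$ and of its grading groups from \cite{KK, KKZ4}, but no comparable concrete description is available for arbitrary semisimple $H$. The cleanest route I would pursue is to establish a single uniform statement: any inner faithful homogeneous action of a nontrivial semisimple Hopf algebra on $\DD(\alpha,\beta)$ has nontrivial homological determinant. This would immediately prevent $A^H$ from being AS-regular, and hence from being isomorphic to $A$. Proving this bound uniformly---rather than relying on a still-unavailable classification of all two-dimensional faithful modules over semisimple non-(co)commutative Hopf algebras---is the principal difficulty.
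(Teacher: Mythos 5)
This statement is left as a \emph{conjecture} in the paper: the authors prove only the group-coaction case (Theorem~\ref{xxthm0.1}), so there is no proof of Conjecture~\ref{xxcon0.2} to compare yours against, and what you have written is a research plan rather than a proof. Your reductions are fine as far as they go: the cocommutative case $H\cong \Bbbk G$ is \cite[Proposition~6.4]{KKZ1} and the commutative case $H\cong (\Bbbk G)^{\ast}$ is Theorem~\ref{xxthm0.1}. But the entire content of the conjecture is the remaining case of semisimple $H$ neither commutative nor cocommutative, and there your plan terminates in what you yourself call ``the principal difficulty.'' That is not a gap in an otherwise complete argument; it is the whole problem.

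More seriously, the mechanism you propose for closing that gap rests on an implication the paper itself refutes. You assert that $\hdet_H\neq \epsilon$ forces $A^H$ to fail the Gorenstein symmetry of an AS-regular algebra, and your ``cleanest route'' is to show the homological determinant of any nontrivial inner faithful action is nontrivial and conclude $A^H$ is not AS regular. The only established direction is \cite[Theorem~3.6]{KKZ3}: trivial homological determinant implies $A^H$ is AS Gorenstein. The converse is precisely the question posed at the end of the introduction, and Example~\ref{xxex2.1} answers it negatively: a $D_8$-coaction on $\DD(0,1)$ with \emph{nontrivial} homological (co)determinant whose fixed ring is nonetheless AS Gorenstein (a commutative complete intersection). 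So nontriviality of $\hdet$ does not by itself obstruct the Gorenstein property, and no cited result upgrades it to an obstruction to AS-regularity. Similarly, the claim that AS-regularity of $A^H$ would force $H$ to be ``generated by quasi-reflections'' is a Shephard--Todd--Chevalley-type converse that is not available for general semisimple Hopf algebras. A successful attack on the conjecture would need a different way to exclude AS-regularity of $A^H$ in the non-(co)commutative case --- for instance, an analogue of the combinatorial generator-counting arguments of Propositions~\ref{xxpro1.4}, \ref{xxpro1.8} and \ref{xxpro1.10}, which is exactly what is currently missing.
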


In a slightly different language, Theorem \ref{xxthm0.1} says 
that graded noetherian down-up algebras do not admit 
``dual reflection groups'' for group coactions in the sense of
\cite[Definition 0.1]{KKZ5}. A result in \cite[Corollary 4.11]{KKZ3}
states: {\it Let $B$ be a noetherian Artin-Schelter 
regular domain, and let $G$ be a finite group acting on $B$
homogeneously. Suppose that $G$ contains no quasi-reflection. 
Then the fixed subring $B^G$ is Artin-Schelter Gorenstein 
if and only if the homological determinant of the $G$-action
is trivial.} When $A$ is a graded noetherian down-up algebra,
$A$ does not have a quasi-reflection of finite order by 
\cite[Proposition 6.4]{KKZ1}. Therefore $A^G$ is AS Gorenstein 
if and only if the $G$-action has trivial homological determinant. 
In \cite[Theorem 3.6]{KKZ3} it is shown that if $H$ is a 
semisimple Hopf algebra acting on an AS regular algebra with 
trivial homological determinant, then $A^H$ is AS Gorenstein; 
it is reasonable to ask whether the converse holds for algebras 
with no ``dual reflection groups".  Hence
Theorem \ref{xxthm0.1} suggests the following question in the 
group coaction setting: 
{\it Let $A$ be a graded noetherian down-up algebra with a finite group
$G$-coaction. If the fixed subring $A^{co\; G}$ is AS Gorenstein, must
the homological determinant of the $G$-coaction be trivial?} 

We conclude the paper with an example (Example \ref{xxex2.1}) that 
provides a negative answer to the above question,  indicating a 
difference between the invariant theory under group actions and the 
invariant theory under group coactions (or more generally, under 
Hopf algebra actions). It would be interesting to develop further 
tools that would determine precisely when,  under a $G$-coaction, 
the fixed subring $A^{co \; G}$ is AS Gorenstein, and, more generally, 
when the  homological determinant of a Hopf action being trivial is a 
necesary condition for $A^H$ to be AS Gorenstein.

\subsection*{Acknowledgments}
J. Chen was partially supported by the National Nature Science 
Foundation of China (Grant No. 11571286)
and the Natural Science Foundation of Fujian Province of China 
(Grant No. 2016J01031). E. Kirkman was partially supported by
grant \#208314 from the Simons Foundation. J.J. Zhang was partially 
supported by the US National Science Foundation (Grant No. DMS 1402863).

\section{Proof of Theorem \ref{xxthm0.1}}
\label{xxsec1}

Some basic definitions can be found in \cite{BB, KKZ5}; for example,
inner faithful is defined in \cite[Definition 2.7]{BB}.
Artin-Schelter regular will be abbreviated by AS regular; for the definition see
\cite[Definition 1.1]{KKZ5}.
We first recall some basic facts about down-up algebras
from \cite{BR, KK, KMP, KKZ4}.

\begin{lemma}
\label{xxlem1.1} Let $A$ be the down-up algebra
$\DD(\alpha,\beta)$ where $\beta\neq 0$.
\begin{enumerate}
\item[(1)]
$A$ is a connected graded noetherian AS regular domain of global
dimension three.
\item[(2)]
The Hilbert series of $A$ is $((1-t)^2(1-t^2))^{-1}$.
\item[(3)]
$\{ u^i(du)^j d^ k\mid i,j,k\geq 0\}$ is a $\Bbbk$-linear basis
of $A$.
\end{enumerate}
\end{lemma}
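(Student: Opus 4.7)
The plan is to prove (3) via Bergman's diamond lemma, deduce (2) immediately by counting basis elements, and then establish (1) from (3) together with a direct construction of the minimal free resolution of the trivial module.

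For (3), equip the free algebra $\Bbbk\langle u,d\rangle$ with the degree-lexicographic order in which $d > u$. Under this order the leading monomials of \eqref{E0.0.1} and \eqref{E0.0.2} are $du^2$ and $d^2u$ respectively, and since $\beta \neq 0$ the relations rewrite as $du^2 \mapsto \beta^{-1}(u^2d - \alpha udu)$ and $d^2u \mapsto \beta^{-1}(ud^2 - \alpha dud)$. A monomial is irreducible exactly when it contains neither $du^2$ nor $d^2u$ as a subword; parsing any such word as alternating $u$-blocks and $d$-blocks forces every internal $d$-block and every internal $u$-block to have length one, so the irreducible monomials are precisely those of the form $u^i(du)^j d^k$. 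There is a single overlap ambiguity of the two leading words (at $d^2u^2$, via the common infix $du$), and a direct computation shows that both reductions of $d^2u^2$ produce the common normal form $\beta^{-2}u^2d^2 - \alpha\beta^{-2}udud - \alpha\beta^{-1}dudu$; by the diamond lemma this proves (3). Part (2) is then immediate, since $u^i(du)^j d^k$ has degree $i+2j+k$:
\[
H_A(t) \;=\; \sum_{i,j,k \geq 0} t^{i+2j+k} \;=\; \frac{1}{(1-t)^2(1-t^2)}.
\]

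For (1), the presentation already makes $A$ connected graded. To obtain the noetherian and domain properties, I would exhibit $A$ as an iterated Ore-type extension: introducing the auxiliary element $\omega = du$, one reads off from \eqref{E0.0.1} and \eqref{E0.0.2} the commutation rules between $\omega$, $u$, and $d$ (for instance $u^2d = (\alpha ud + \beta du)u$ and $ud^2 = d(\alpha ud + \beta du)$), identifying $A$ with a presentation of the type in \cite{KMP} from which both properties follow. For Artin--Schelter regularity of global dimension three, I would construct the candidate minimal graded free resolution of the trivial module
\[
0 \to A(-4) \xrightarrow{\phi_3} A(-3)^2 \xrightarrow{\phi_2} A(-1)^2 \xrightarrow{\phi_1} A \to \Bbbk \to 0,
\]
in which $\phi_1$ sends the two basis vectors to $u$ and $d$; $\phi_2$ encodes the cubic relations $r_1 = u^2d - \alpha udu - \beta du^2$ and $r_2 = ud^2 - \alpha dud - \beta d^2 u$; and $\phi_3$ encodes the unique (up to scalar) degree-four syzygy between $r_1$ and $r_2$, obtained by comparing $r_1 d$ with $u r_2$. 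Exactness is checked against the PBW basis from (3), and the identity $H_A(t)^{-1} = 1 - 2t + 2t^3 - t^4$ matches the Betti numbers $1,2,2,1$. The palindromic shape of this resolution then yields, upon dualizing, that $\Ext^i_A(\Bbbk, A) = 0$ for $i \neq 3$ and $\Ext^3_A(\Bbbk, A) \cong \Bbbk(4)$, which is the AS regular condition.

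The main obstacle is the noetherian property in (1): once the PBW basis is available, the AS regular structure and the minimal resolution are a finite bookkeeping exercise, but noetherianity really requires the additional input of an Ore-extension (or generalized Weyl algebra) presentation, which is precisely the computational content isolated in \cite{BR, KK, KMP, KKZ4}.
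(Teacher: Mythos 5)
Your diamond-lemma argument for (3) is correct: with deg-lex and $d>u$ the leading words are $du^2$ and $d^2u$, the only overlap is $d^2u^2$, and both reductions do yield $\beta^{-2}u^2d^2-\alpha\beta^{-2}udud-\alpha\beta^{-1}dudu$, so the irreducible words $u^i(du)^jd^k$ form a basis and (2) follows by counting. The comparison with the paper is somewhat one-sided, however: the paper offers no proof of this lemma at all --- it is stated as a recollection of known facts from \cite{BR, KK, KMP, KKZ4} (the PBW basis is due to Benkart--Roby, and noetherianity together with AS regularity of global dimension three for $\beta\neq 0$ is the main theorem of \cite{KMP}). So your write-up supplies a self-contained verification of (2), (3), and the regularity part of (1), which is more than the paper attempts; the one ingredient you cannot avoid outsourcing, and which you correctly flag, is the noetherian property, where the generalized-Weyl-algebra/Ore presentation of \cite{KMP} is genuinely needed. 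Two small points of precision for part (1): the degree-four syzygy is not literally obtained from $r_1d$ versus $ur_2$ --- in the free algebra $r_1d-ur_2=\beta(ud^2u-du^2d)\neq 0$; writing $r_i=\sum_j m_{ij}x_j$ with $x_1=u$, $x_2=d$, the correct left syzygy is the row vector $(\beta d,\,-u)$ annihilating the matrix $(m_{ij})$, which one checks using $\beta du^2=u^2d-\alpha udu$ and $r_2=0$. Also, exactness of the candidate resolution is not purely a Betti-number count: you must verify it is a complex and that exactness in the middle follows from the Hilbert series identity $H_A(t)(1-2t+2t^3-t^4)=1$ together with injectivity of $\phi_3$ (which the domain property gives you); once that is in place the palindromic dualization argument for the Gorenstein condition is standard.
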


The following lemma is well-known.

\begin{lemma}
\label{xxlem1.2}
Let $A$ be a noetherian connected graded AS regular algebra of
global dimension three. Then $A$ is generated by either two or three
elements.
\end{lemma}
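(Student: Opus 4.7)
The plan is to read off $n$, the minimal number of algebra generators of $A$, from the minimal graded free resolution of the trivial module $\Bbbk = A/A_{\geq 1}$. Since $\gldim A = 3$, this resolution has the form
\begin{equation*}
0 \to P_3 \to P_2 \to P_1 \to A \to \Bbbk \to 0,
\end{equation*}
with each $P_i$ a finitely generated graded free $A$-module, and by minimality $n = \rank P_1 = \dim_\Bbbk \Tor_1^A(\Bbbk,\Bbbk)$.

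First I would exploit the AS Gorenstein condition: applying $\Hom_A(-,A)$ to the resolution and using that $\Ext^i_A(\Bbbk,A) = 0$ for $i \neq 3$ together with $\Ext^3_A(\Bbbk,A) \cong \Bbbk(\ell)$ produces, after a degree shift, the minimal free resolution of $\Bbbk$ as a right $A$-module. This self-duality forces $P_3 \cong A(-s)$ for some $s \ge 1$ and $\rank P_2 = \rank P_1 = n$. Writing $P_1 = \bigoplus_{j=1}^n A(-a_j)$ with each $a_j \ge 1$, the alternating sum of the Hilbert series of the $P_i$ yields
\begin{equation*}
p(t) \;:=\; h_A(t)^{-1} \;=\; 1 - \sum_{j=1}^n t^{a_j} + \sum_{j=1}^n t^{s-a_j} - t^s,
\end{equation*}
and the self-duality above translates into the functional equation $t^s p(t^{-1}) = -p(t)$.

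Next I would rule out $n \notin \{2,3\}$. The cases $n = 0$ and $n = 1$ are immediate: they would force $A = \Bbbk$ or $A$ to be a graded quotient of a polynomial ring in one variable, neither of which can have global dimension three. For $n \ge 4$ I would invoke the noetherian hypothesis: a noetherian connected graded AS regular algebra of global dimension three has finite GK-dimension equal to three, so the polynomial $p(t)$ must vanish at $t = 1$ to order exactly three. Combining this divisibility condition with the functional equation $t^s p(t^{-1}) = -p(t)$ and the positivity constraints on the $a_j$ coming from minimality of the resolution leaves no admissible tuple $(a_1,\dots,a_n,s)$ when $n \ge 4$. The main obstacle is this final arithmetic analysis, which is the heart of the Artin--Schelter classification \cite{AS} of regular algebras of dimension three, and is the reason the lemma is declared well-known.
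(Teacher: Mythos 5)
Your setup is the correct one and is exactly the machinery behind the references on which the paper relies: the paper's entire proof of this lemma is a citation of \cite[Proposition 1.5]{AS} (for $A$ generated in degree 1) and \cite[Proposition 1.1(i)]{Ste} (for $A$ not generated in degree 1), and those propositions are proved by analyzing the minimal free resolution of $\Bbbk$ and the polynomial $p(t)=h_A(t)^{-1}$ just as you describe. The duality argument giving $P_3\cong A(-s)$, $\rank P_2=\rank P_1=n$, and the paired shifts $a_j$ and $s-a_j$ is right, as is the disposal of $n\le 1$.

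However, there is a genuine gap exactly where the lemma has its content: the claim that the constraints ``leave no admissible tuple $(a_1,\dots,a_n,s)$ when $n\ge 4$'' is asserted, not proved, and you flag it yourself as the main obstacle. Two concrete points. First, the functional equation $t^s p(t^{-1})=-p(t)$ is automatically satisfied by any polynomial of the displayed shape, so it contributes no constraint beyond what you have already written down; the constraints that actually do the work are that $1/p(t)$ must be a power series with non-negative integer coefficients together with the growth consequences of noetherianity, and extracting $n\le 3$ from these is a nontrivial arithmetic argument (carried out in \cite{AS}, with further cases in \cite{Ste} when the generators are not all in degree one). Second, the assertion that the GK-dimension equals exactly three, equivalently that $p$ vanishes at $t=1$ to order exactly three, is itself a theorem needing justification rather than part of the definition of AS regular used in this paper. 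As written, your argument reduces the lemma to precisely the analysis performed in the two cited propositions without carrying it out; to be complete it should either do that case analysis or, as the paper does, cite it.
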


\begin{proof} This is \cite[Proposition 1.5]{AS} when $A$ is generated
in degree 1 and \cite[Proposition 1.1(i)]{Ste} when $A$ is not generated
in degree 1.
\end{proof}

It is well-known that, for every finite group $G$, a left
$(\Bbbk G)^\ast$-action on an algebra $A$ is equivalent to a right
$G$-coaction on $A$.  Since $\Bbbk$ is algebraically closed of
characteristic zero, if $G$ is a finite abelian group, the Hopf
algebra $\Bbbk G$ is isomorphic to its dual $(\Bbbk G)^\ast$. This
fact implies that a right $G$-coaction on $A$ is equivalent to some
left $G$-action on $A$. We will use these facts freely. The
following lemma is a rigidity result for abelian $G$-coactions.

\begin{lemma}
\label{xxlem1.3}
Let $A$ be a graded noetherian down-up algebra and $G$ be a
nontrivial finite abelian group coacting on $A$. Then the
fixed subring $A^{co\; G}$ is not AS regular.
\end{lemma}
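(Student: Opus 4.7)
The plan is to reduce the problem, via Pontryagin duality for finite abelian groups, to a statement about group actions, and then to obstruct AS regularity of the fixed ring by a minimal generator count.

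Since $\Bbbk$ is algebraically closed of characteristic zero and $G$ is finite abelian, the Hopf algebras $\Bbbk G$ and $(\Bbbk G)^{\ast}$ are isomorphic. Consequently an inner faithful homogeneous $G$-coaction on $A$ is the same data as a faithful homogeneous action of $H := \hat G = \mathrm{Hom}(G, \Bbbk^{\ast})$ on $A$, with $A^{co\, G} = A^{H}$. So it suffices to show: for every nontrivial finite abelian group $H$ acting faithfully and homogeneously on $A = \DD(\alpha, \beta)$, the invariant ring $A^{H}$ is not AS regular. Being finite abelian, $H$ acts diagonalizably on the $2$-dimensional space $A_{1} = \Bbbk u + \Bbbk d$, so I would pick an $H$-eigenbasis $\{x, y\} \subset A_{1}$ with characters $\chi_{x}, \chi_{y} \in \hat H$. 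If both characters are trivial then $H$ acts trivially on the generators of $A$ and hence on all of $A$, contradicting faithfulness; so at least one is nontrivial and $\dim_{\Bbbk} A_{1}^{H} \leq 1$.

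Now assume for contradiction that $A^{H}$ is AS regular. Because $H$ is finite and $A$ is noetherian of GK-dimension $3$, the invariant subring $A^{H}$ has GK-dimension $3$, and thus as an AS regular algebra it has global dimension $3$. By Lemma \ref{xxlem1.2}, $A^{H}$ would be generated by at most three homogeneous elements. The plan is to contradict this bound by using the PBW basis $\{u^{i}(du)^{j} d^{k}\}$ from Lemma \ref{xxlem1.1}(3) together with the eigencharacters of the $H$-action to compute $\dim_{\Bbbk} A_{n}^{H}$ in low degrees and to exhibit at least four minimal generators of $A^{H}$. Equivalently, one may use Molien's formula
\[
H_{A^{H}}(t) = |H|^{-1} \sum_{g \in H} \bigl((1 - \zeta_{g} t)(1 - \zeta_{g} \eta_{g} t^{2})(1 - \eta_{g} t)\bigr)^{-1},
\]
with $(\zeta_{g}, \eta_{g})$ the eigenvalues of $g$ on a diagonalized basis, and compare $H_{A^{H}}(t)$ directly against the (short) list of possible Hilbert series of AS regular algebras of global dimension three.

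The main obstacle is to carry out this generator count uniformly across all subcases, in particular when the $H$-eigenbasis $\{x, y\}$ does not coincide with the presentation basis $\{u, d\}$. This can occur for special parameters $(\alpha, \beta)$ for which $\Aut_{\mathrm{gr}}(A)$ contains extra symmetries beyond the diagonal torus (such as the swap $u \leftrightarrow d$), so one must invoke the classification of graded automorphisms from \cite{KK} in order to rewrite either the relations of $A$, or at least the $H$-character of each graded piece, in the eigenbasis before the count can be completed. The extra leverage needed comes from \cite[Proposition 6.4]{KKZ1}: the absence of quasi-reflections of finite order on $A$ forces every nontrivial $h \in H$ to fix only a codimension-$\geq 2$ subspace of $A_{1}$ in the eigenbasis, which prevents the invariants from filling the lowest graded pieces in a polynomial-like pattern and hence forces the appearance of at least four minimal generators in every subcase.
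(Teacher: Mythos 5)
Your first paragraph is exactly the paper's argument: since $G$ is finite abelian and $\Bbbk$ is algebraically closed of characteristic zero, $\Bbbk G\cong(\Bbbk G)^{\ast}$, so the coaction is an action of the character group $H=\widehat{G}$ with $A^{co\,G}=A^{H}$. At that point the paper is done in one line, because \cite[Proposition 6.4]{KKZ1} \emph{is} the rigidity statement for finite group actions on graded noetherian down-up algebras: for any nontrivial finite group acting homogeneously on $\DD(\alpha,\beta)$ the fixed subring is not AS regular. You instead read that proposition as supplying only the absence of quasi-reflections and then set out to re-derive the non-regularity of $A^{H}$ from scratch by a generator count. That second half is where the gap is: it is a plan, not a proof. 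You never actually produce four minimal generators or compute $H_{A^{H}}(t)$ in any case, and you explicitly defer the entire case analysis (``the main obstacle is to carry out this generator count uniformly across all subcases''), which is precisely the content that would need to be supplied.

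Moreover, the one concrete deduction you do offer from \cite[Proposition 6.4]{KKZ1} is not valid: the absence of quasi-reflections of finite order does \emph{not} imply that every nontrivial $h\in H$ fixes only a codimension-$\geq 2$ subspace of $A_{1}$. Quasi-reflections of an AS regular algebra are defined through the trace function $\mathrm{Tr}_{A}(h,t)$, not through the codimension of the fixed subspace of the degree-one part; an element acting on $A_{1}$ with eigenvalues $(1,\lambda)$, $\lambda\neq 1$, fixes a hyperplane of $A_{1}$ yet is typically not a quasi-reflection of the down-up algebra, so such elements are not excluded and your ``polynomial-like pattern'' obstruction does not follow. The fix is simply to use the cited proposition for what it asserts: after the duality reduction, $A^{H}$ is not AS regular by \cite[Proposition 6.4]{KKZ1}, and the lemma follows with no further computation.
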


\begin{proof} If $G$ is abelian, $\Bbbk G$ is isomorphic to
$(\Bbbk G)^{\ast}$ as Hopf algebras.
Since $G$ is abelian coacting on $A$, there is a $G$-action on $A$
such that $A^{co\; G}=A^G$. The assertion is a consequence
of \cite[Proposition 6.4]{KKZ1}.
\end{proof}

We consider, first, the case when $\alpha= 0$ and $G$ coacts 
homogeneously on the generators $u$ and $d$.  Note that 
although we show that $\DD_{\beta}$
is rigid with respect to these group coactions, the algebra 
can be graded by many different groups.

\begin{proposition}
\label{xxpro1.4} Let $A$ be the algebra $\DD_{\beta}$ and $G$ be a
nontrivial finite group coacting on $A$ such that $u$ and $d$ are
$G$-homogeneous. Then the fixed subring $A^{co\; G}$ is not AS regular.
\end{proposition}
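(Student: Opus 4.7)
The plan is to reduce to the inner-faithful case $G = \langle g_u, g_d \rangle$, dispose of the abelian case via Lemma~\ref{xxlem1.3}, and in the non-abelian case exhibit four distinct minimal algebra generators of $A^{co\,G}$, contradicting Lemma~\ref{xxlem1.2}.

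Because $A$ is generated by $u$ and $d$, the coaction factors through $\langle g_u, g_d \rangle$, so I may replace $G$ by this subgroup and assume $G = \langle g_u, g_d \rangle$. The $G$-homogeneity of the defining relations $u^2d = \beta du^2$ and $ud^2 = \beta d^2 u$ forces $g_u^2 g_d = g_d g_u^2$ and $g_u g_d^2 = g_d^2 g_u$; combined with $G = \langle g_u, g_d \rangle$, this gives $g_u^2, g_d^2 \in Z(G)$. If $G$ is abelian, Lemma~\ref{xxlem1.3} applies immediately.

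Suppose now that $G$ is non-abelian, and set $m = \operatorname{ord}(g_u)$, $n = \operatorname{ord}(g_d)$, and $p = \operatorname{ord}(g_d g_u)$. Since $g_u^2 \in Z(G)$, if $m$ were odd then $\langle g_u \rangle = \langle g_u^2 \rangle \subseteq Z(G)$ and $G$ would be abelian; hence $m$, and similarly $n$, is even. Also $p \geq 2$, for $p = 1$ would give $g_d = g_u^{-1}$ and $G$ cyclic; conjugating $g_u g_d$ by $g_d$ and using $g_d^2 \in Z(G)$ shows $\operatorname{ord}(g_u g_d) = p$ as well. Thus the four elements $u^m,\ d^n,\ (du)^p,\ (ud)^p = u(du)^{p-1}d$ all lie in $A^{co\,G} = A_e$, and I claim they are four distinct minimal algebra generators. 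Using the PBW basis $\{u^i(du)^jd^k\}$ of Lemma~\ref{xxlem1.1}(3), I track the exponent $j$. The $\alpha = 0$ relations give the quasi-commutation identities $u^2(du)^j = \beta^j(du)^j u^2$ and $d^2(du)^j = \beta^{-j}(du)^j d^2$; hence every monomial in $\{u^m, d^n, (du)^p\}$ reduces, up to a scalar, to a basis element $u^{am}(du)^{cp}d^{bn}$, whose $(du)$-exponent lies in $p\mathbb{Z}_{\geq 0}$. Since $(ud)^p$ has $(du)$-exponent $p-1 \notin p\mathbb{Z}$ (as $p \geq 2$), it is not in $\langle u^m, d^n, (du)^p\rangle$; a parallel argument puts $(du)^p$ outside $\langle u^m, d^n, (ud)^p\rangle$, and $u^m, d^n$ are plainly minimal as pure one-variable invariants.

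Finally, since $\GKdim A^{co\,G} = \GKdim A = 3$, any AS regular structure on $A^{co\,G}$ would have global dimension $3$; by Lemma~\ref{xxlem1.2} such an algebra is generated by at most three elements, contradicting the four generators above. The main obstacle is the uniform minimality analysis in the non-abelian case: the $j$-index bookkeeping must be run consistently over all non-abelian groups $G$ satisfying $g_u^2, g_d^2 \in Z(G)$ (including dihedral, generalized quaternion, and Heisenberg-type central extensions), and one must verify that no three-generator subalgebra absorbs the fourth generator. The hypothesis $\alpha = 0$ is essential here, since it supplies the clean quasi-commutation formulas that make the $j$-index tracking tractable.
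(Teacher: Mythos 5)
Your overall strategy is the same as the paper's (reduce to the non-abelian case via Lemma~\ref{xxlem1.3}, use centrality of $g_u^2,g_d^2$ to get even orders, exhibit the invariant monomials $u^m, d^n, (du)^p, (ud)^p$, and contradict the three-generator bound of Lemma~\ref{xxlem1.2}), and your quasi-commutation identities and the two non-membership checks you do carry out are correct. But there is a genuine gap at the decisive step. What Lemma~\ref{xxlem1.2} requires you to contradict is that \emph{some} set of three elements generates $A^{co\,G}$; you only rule out the two specific sets $\{u^m,d^n,(du)^p\}$ and $\{u^m,d^n,(ud)^p\}$. Knowing that $(ud)^p\notin\langle u^m,d^n,(du)^p\rangle$ does not make $(ud)^p$ a minimal generator --- it could a priori lie in $B_+^2$ via a factorization such as $(ud)^p=\bigl(u(du)^a\bigr)\bigl((du)^{p-1-a}d\bigr)$ with both factors of trivial $G$-degree, and, more to the point, a hypothetical third generator need not be $(du)^p$ or $(ud)^p$ at all; it could be any invariant monomial $u^{m'}(du)^{n'}d^{p'}$. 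You explicitly flag this ("one must verify that no three-generator subalgebra absorbs the fourth generator") as "the main obstacle," but that obstacle \emph{is} the proof.

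The paper closes exactly this gap in two moves you omit. First, since every monomial in $u,d$ is $G$-homogeneous (so the fixed ring is spanned by the monomials it contains and products of basis monomials are scalar multiples of basis monomials), a minimal generating set may be assumed to consist of monomials $h_s=u^{m_s}(du)^{n_s}d^{p_s}$. Second, a short forcing argument pins down what three such monomials would have to be: to generate $d^j$ some $h_s$ must be a pure power of $d$ (hence $d^j$ itself), symmetrically another must be $u^i$, and the third must have the form $(du)^n$ because otherwise $(du)^k$ cannot be reached; only then does the parity/exponent bookkeeping show $(ud)^l$ is unreachable. Without this reduction your argument shows only that two particular triples fail, not that $A^{co\,G}$ needs four generators. (A minor secondary point: your appeal to $\GKdim A^{co\,G}=3$ to conclude global dimension $3$ is an unnecessary detour; the paper gets global dimension $3$ for an AS regular fixed ring directly from \cite[Lemma 3.3(2)]{KKZ5}.)
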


\begin{proof} By Lemma \ref{xxlem1.3}, we can assume that $G$ is not abelian.
Let $\deg_G u=a$ and $\deg_G d=b$; then $G$ is generated by $\Re:=\{a,b\}$.
Using the relations $u^2 d=\beta du^2$ and $ud^2=\beta d^2 u$, we obtain
that $a^2$ and $b^2$ are in the center of $G$. This implies that
the orders of $a$ and $b$ are even. Let $i$, $j$, $k$ and $l$ be the smallest
positive integers such that $a^i=e$, $b^j=e$, $(ba)^k=e$ and $(ab)^l=e$.
Since $G$ is non-abelian, $i,j,k,l$ are all larger than 1, and both of $i$ and $j$
must be even. Then $x:=u^i$, $y:=d^j$, $z:=(du)^k$ and $t:=(ud)^l$ are
elements in $A^{co\; G}$.

Assume to the contrary that the fixed subring $A^{co\; G}$ is AS regular.
By Lemma \ref{xxlem1.2}, it is generated by at most three elements.
Choosing the generators of $A^{co\; G}$ carefully from lower degree to
higher, and using the fact that every monomial in $u$ and $d$ is
$G$-homogeneous, we can assume that $A^{co\; G}$ is generated by
$$h_s: =u^{m_s} (du)^{n_s} d^{p_s},$$
where $s$ is $1,2,3$ or $1,2$. If $m_s+n_s>0$ for all $s$, then
$y=d^j$ cannot be generated by $\{h_s\}_{s}$, a contradiction. Thus
$m_s+n_s=0$ for some $s$. Similarly, we have $n_s+p_s=0$ for some
$s$. These facts mean that we have
$$h_1=u^i=x \quad {\text{and}}\quad  h_2=d^j=y.$$
Since $x$ and $y$ do not generate $z$, $A^{co\; G}$ is generated
by three elements, namely, by $h_1=u^i$, $h_2=d^j$ and $h_3= u^m (du)^n d^p$.
If $m>0$, then $z=(du)^k$ cannot be generated by $h_1,h_2,h_3$.
Thus $m=0$. By symmetry, $p=0$. This implies that
$$h_3=(du)^n.$$
Since $i$ and $j$ are even, $t=(ud)^l$ cannot be generated by
$h_1,h_2,h_3$. Hence $A^{co\; G}$ is not generated by three (or fewer)
elements. This yields a contradiction by Lemma \ref{xxlem1.2}, and
therefore the fixed subring $A^{co\; G}$ is not AS regular.
\end{proof}

Next we consider an algebra  $\FF$ that is isomorphic to $\DD_{-1}$.
Let $\FF$ be the algebra generated by $x$ and $y$,
and subject to the two relations
\begin{equation}
\label{E1.4.1}\tag{E1.4.1}
x^3=yxy \quad \mbox{ and } \quad y^3=xyx.
\end{equation}

\begin{lemma}
\label{xxlem1.5} Retain the above notation.
\begin{enumerate}
\item[(1)]
$\FF$ is isomorphic to $\DD_{-1}$.
\item[(2)]
$\FF$ is a graded noetherian AS regular domain of global dimension three
with Hilbert series $((1-t)^2(1-t^2))^{-1}$.
\end{enumerate}
\end{lemma}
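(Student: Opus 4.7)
My plan is to produce an explicit degree-preserving algebra isomorphism $\FF \cong \DD_{-1}$; once this is in hand, part (2) follows immediately from Lemma~\ref{xxlem1.1} applied with $\alpha = 0$ and $\beta = -1$.

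For part (1), I will exhibit mutually inverse homomorphisms
\[
\phi\colon \FF \to \DD_{-1},\ \ x \mapsto u+d,\ y \mapsto u-d, \qquad \psi\colon \DD_{-1} \to \FF,\ \ u \mapsto \tfrac{1}{2}(x+y),\ d \mapsto \tfrac{1}{2}(x-y).
\]
To check $\phi$ is well-defined, I verify \eqref{E1.4.1} on the images $u \pm d$; using only the relations $u^2 d = -du^2$ and $ud^2 = -d^2u$ of $\DD_{-1}$, a direct eight-term expansion shows $(u+d)^3 = u^3 + udu + dud + d^3 = (u-d)(u+d)(u-d)$ and the analogous identity $(u-d)^3 = u^3 - udu + dud - d^3 = (u+d)(u-d)(u+d)$.

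To check $\psi$ is well-defined, I substitute $u = \tfrac{1}{2}(x+y)$ and $d = \tfrac{1}{2}(x-y)$ into the two defining relations of $\DD_{-1}$. A direct expansion gives
\[
u^2 d + d u^2 = \tfrac{1}{4}\bigl((x^3 - yxy) + (xyx - y^3)\bigr) \quad\text{and}\quad u d^2 + d^2 u = \tfrac{1}{4}\bigl((x^3 - yxy) + (y^3 - xyx)\bigr),
\]
both of which vanish by \eqref{E1.4.1}. Since $\phi \circ \psi$ and $\psi \circ \phi$ are evidently the identity on generators, $\phi$ and $\psi$ are mutually inverse degree-preserving isomorphisms, proving (1). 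Part (2) then follows at once from Lemma~\ref{xxlem1.1}. The main task is the monomial bookkeeping in the expansions; there is no conceptual obstacle.
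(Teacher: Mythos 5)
Your proposal is correct and follows essentially the same route as the paper: the paper's proof of (1) is exactly the linear change of variables $x=u+d$, $y=u-d$ (equivalently $u=\tfrac12(x+y)$, $d=\tfrac12(x-y)$), observing that the relations \eqref{E1.4.1} correspond to $u^2d=-du^2$ and $ud^2=-d^2u$, and (2) is likewise deduced from the known properties of $\DD_{-1}$. You merely spell out the two mutually inverse homomorphisms and the expansions more explicitly than the paper does, and your computations check out.
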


\begin{proof} (1) Setting $x=u+d$ and $y=u-d$, then the two relations
$x^3=yxy$ and $y^3=xyx$ are equivalent to the two relations
$u^2d=-du^2$ and $ud^2=-d^2u$. The assertion follows.

(2) This follows from the fact that all the assertions hold
for $\DD_{-1}$.
\end{proof}

Next we will apply Bergman's Diamond Lemma \cite{Be} to the algebra
$\FF$. By \cite{Be}, starting
with a set of initial relations, we can obtain a complete set 
of relations that is a
reduction system. Then every monomial (or word) becomes reduction-unique
by using this complete system.

\begin{lemma}
\label{xxlem1.6} Retain the notation as above.
\begin{enumerate}
\item[(1)]
Define an order on monomials by extending $x<y$ lexicographically. 
Then we have a complete set of five relations that is the reduction system
in the sense of \cite[p.180]{Be}.
$$\begin{aligned}
y^3& =xyx,\\
yxy&=x^3,\\
y^2x^3&=xyx^2y,\\
yx^2yx&=x^3 y^2,\\
yx^4&=x^4y.
\end{aligned}
$$
\item[(2)] We also have the other relations:
$$\begin{aligned}
y^4& =x^4,\\
yxyx&=x^4,\\
xyxy&=y^4.
\end{aligned}
$$
\item[(3)]
There is a $\Bbbk$-linear basis consisting of the monomials
of the form
$$x^i (yx^3)^j (yx^2)^{\epsilon} (y^2x^2)^k y^a x^b$$
where $i,j,k\geq 0$, $\epsilon$ is either $0$ or $1$,
and
$$(a,b)=(0,0), (1,0), (1,1), (1,2), (1,3), (2,0), (2,1), (2,2)$$
if $j+\epsilon+k=0$,
$$(a,b)=(1,0), (1,1), (1,2), (1,3), (2,0), (2,1), (2,2)$$
if $j>0$ and $\epsilon+k=0$, and
$$(a,b)=(1,0), (2,0), (2,1), (2,2)$$
if $\epsilon+k>0$.
\end{enumerate}
\end{lemma}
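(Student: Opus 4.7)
The plan is to apply Bergman's Diamond Lemma \cite{Be} directly to the presentation of $\FF$. Take the length-lexicographic semigroup order induced by $x<y$; under this order, the two given relations $x^3=yxy$ and $y^3=xyx$ have leading words $yxy$ and $y^3$ respectively. I would first enumerate all overlap ambiguities between pairs of these leading words (no inclusion ambiguities arise), reduce each in two ways, and either check that the two reductions coincide or record the forced equality as a new rule.

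The initial overlaps to consider are $y^4$, $y^5$, $y^3\cdot xy$, $yx\cdot y^3$, and $yxy\cdot xy$. The $y^4$ overlap resolves via $y^4=y\cdot y^3\to yxyx$ on one side and $y^4=y^3\cdot y\to xyxy$ on the other, and each of these further reduces via $yxy\to x^3$ to $x^4$, producing the identities $yxyx=xyxy=x^4$ and hence $y^4=x^4$ stated in part (2). The $y^5$ overlap yields the two distinct normal forms $x^4 y$ and $yx^4$, forcing the new rule $yx^4=x^4 y$ of part (1); the same rule reappears from the $yxy\cdot xy$ overlap. The $y^3\cdot xy$ overlap gives $y^2 x^3=xyx^2 y$, and the $yx\cdot y^3$ overlap gives $yx^2 yx=x^3 y^2$: these are the remaining two new rules of part (1). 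I would then augment the rewriting system with these three new rules and verify confluence of the resulting five-rule system by examining every overlap ambiguity among the leading words $y^3$, $yxy$, $y^2 x^3$, $yx^2 yx$, $yx^4$. As a global consistency check, the Hilbert series of the span of irreducible monomials should agree with $((1-t)^2(1-t^2))^{-1}$ from Lemma \ref{xxlem1.5}(2).

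For part (3), an irreducible monomial is one avoiding all five leading subwords. Writing such a monomial as $x^{b_0}y^{a_1}x^{b_1}\cdots y^{a_n}x^{b_n}$, avoiding $y^3$ forces each $a_i\in\{1,2\}$, and avoiding $yxy$, $y^2 x^3$, $yx^2 yx$, $yx^4$ further restricts the allowed $y$-block to $x$-gap patterns. I would show that the allowed interior blocks are precisely $yx^3$, $yx^2$, and $y^2 x^2$, and that in every normal form the $yx^3$ blocks must precede any $yx^2$ block (which appears at most once), which in turn must precede the $y^2 x^2$ blocks; this yields the initial form $x^i (yx^3)^j (yx^2)^\epsilon (y^2 x^2)^k$. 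The final tail $y^a x^b$ is then constrained by the last block so as to avoid the forbidden subwords at the boundary, and the three cases $j+\epsilon+k=0$, $j>0$ with $\epsilon+k=0$, and $\epsilon+k>0$ enforce uniqueness by absorbing any trailing $yx^3$ or $y^2 x^2$ into the tail.

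The main obstacle is the confluence verification for the full five-rule system: the longer leading words $y^2 x^3$, $yx^2 yx$, and $yx^4$ generate additional overlap ambiguities with one another and with $y^3$ and $yxy$, and each must be resolved by explicit two-way reduction. While each individual computation is elementary, the bookkeeping is nontrivial; the Hilbert series comparison with Lemma \ref{xxlem1.5}(2) provides a useful cross-check against errors.
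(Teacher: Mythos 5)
Your proposal is correct and follows essentially the same route as the paper: compute the overlap ambiguities of the two defining relations under the (length-)lex order with $x<y$ to extract the three additional rules, verify confluence of the resulting five-rule system, and then classify the irreducible monomials by their $y$-block/$x$-block structure to obtain the normal form in part (3). The explicit overlap resolutions you carry out ($y^4$, $y^5$, $y^3xy$, $yxy^3$, $yxyxy$) match the paper's computations, and your block analysis for part (3) agrees with the paper's.
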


\begin{proof} (1) The assertion follows by direct computation. 
In fact, denote the relation $y^3 =xyx$ by (i), and the relation 
$yxy=x^3$ by (ii). Then, by using (i)+(ii), we have 
$y^3xy=y^2(yxy)=y^2x^3$ and $y^3xy=y^3(xy)=(xyx)(xy)=xyx^2y$. 
Thus we obtain the third relation (denote it by (iii)) in the list.
By using (ii)+(i), we have $yxy^3=(yx)y^3=(yx)xyx=yx^2yx$, 
and $yxy^3=(yxy)y^2=x^3y^2$, and thus the fourth relation 
(denote it by (iv)) in the list holds. Similarly, by using (ii)+(i), 
we obtain the fifth relation (denote it by (v)) in the list.  Then 
considering all the other possible
cases: (i)+(iii), (i)+(iv), (i)+(v),(ii)+(iii), (ii)+(iv) and (ii)+(v), 
there are no new relations.

(2) These assertions follow easily.

(3) Every monomial is of the form
$x^{i_1}y^{j_1}x^{i_2}y^{j_2}\cdots y^{j_{n-1}}x^{i_n}$,
where $n\geq 1$, $i_n\geq 0$, $j_s\geq 1$ and $i_s\geq 1$
for $ 1\leq s<n$. By the first relation $j_s$ can  be only 
1 or 2, and by the fifth relation in part (1), for $s>1$, 
$i_s$ can  be only 1, 2, or 3.

Take the last term $n$ into consideration. If $n=1$, we only have $x^{i_1}$. If $n=2$, we 
have only
$$x^i y, \quad x^i y^2, \quad x^i yx, \quad x^i yx^2,
\quad x^i yx^3, \quad x^i y^2 x, \quad x^i y^2 x^2$$
or
$$x^i y^{a} x^b$$
where $i\geq 0$ and 
\begin{equation}
\notag
(a,b)=(1,0), (1,1), (1,2), (1,3), (2,0), (2,1), (2,2).
\end{equation}
For $n\geq 3$, note that in the middle of the monomial,
$y^{j_{s-1}} x^{i_s}$ must be $yx^2$, or $yx^3$, or $y^2 x^2$.
Further, $yx^2 yx^3$, $y^2x^2 yx^3$, $y^2x^2 y x^2$, $y x^2 y x^2$
cannot appear in the middle of the monomial. Therefore we have
$$x^i (yx^3)^j (yx^2)^{\epsilon} (y^2x^2)^k y^a x^b$$
where $i,j,k\geq 0$, $\epsilon$ is either $0$ or $1$,
and
$$(a,b)=(0,0), (1,0), (1,1), (1,2), (1,3), (2,0), (2,1), (2,2)$$
if $j+\epsilon+k=0$,
$$(a,b)=(1,0), (1,1), (1,2), (1,3), (2,0), (2,1), (2,2)$$
if $j>0$ and $\epsilon+k=0$, and
$$(a,b)=(1,0), (2,0), (2,1), (2,2)$$
if $\epsilon+k>0$.
By Bergman's Diamond Lemma \cite{Be}, all monomials in part (3) form a
$\Bbbk$-linear basis of the algebra.
\end{proof}

Let $f$ be a monomial $x_1 x_2 x_3\cdots x_n$ in $\FF$ where $x_i$
is either $x$ or $y$. A {\it left subword} of $f$ is a monomial
of the form $x_1 x_2\cdots x_j$ for $j\leq n$, a {\it subword}
of $f$ is a monomial of the form $x_i\cdots x_j$ for some
$1\leq i\leq j\leq n$. Due to non-trivial relations in $\FF$,
if $ab=f$ for three monomials $a,b,f$, it is not necessarily true
that $a$ is a left subword of $f$.  The following lemma says that
in some special cases, $a$ must be a left subword of $f$.

\begin{lemma}
\label{xxlem1.7} Let $f$ be a subword of $(y^2 x^2)^s=yyxxyyxx\cdots yyxx$
for some $s\geq 1$. If $f=ab$ for some monomials $a,b$ in $\FF$, then
$a$ is a left subword of $f$.
\end{lemma}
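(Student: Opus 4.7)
My plan is to induct on the length $|f|$ of $f$ as a word. The base case $|f| = 0$ is immediate: then $f = 1$, and $ab = 1$ in the connected graded domain $\FF$ (Lemma~\ref{xxlem1.5}(2)) forces $a = b = 1$. For $|f| \geq 1$ write $f = f_1 f_2 \cdots f_n$ with $f_i \in \{x, y\}$. If $a = 1$ there is nothing to prove; otherwise I will show that $a$ must begin with the letter $f_1$. Granting this, I write $a = f_1 a'$ and cancel the nonzero element $f_1$ on the left in the domain $\FF$ to obtain $a' b = f_2 \cdots f_n$, a strictly shorter subword of $(y^2 x^2)^s$; the inductive hypothesis then yields that $a'$ is a left subword of $f_2 \cdots f_n$, whence $a = f_1 a'$ is a left subword of $f$.

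The heart of the argument is the following key claim: \emph{if $g$ is a subword of $(y^2 x^2)^s$ that does not begin with a given letter $\ell \in \{x, y\}$, then $g \notin \ell \FF$.} Granted the key claim, if $a$ began with a letter $\ell \neq f_1$, then writing $a = \ell a'$ gives $f = ab = \ell (a' b) \in \ell \FF$, contradicting the key claim applied to $g = f$.

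To prove the key claim for $\ell = x$: every maximal run of $x$'s or of $y$'s in $(y^2 x^2)^s$ has length exactly $2$, so the runs in any subword have length at most $2$, and consequently none of the five reduction left-hand sides $y^3$, $yxy$, $y^2 x^3$, $yx^2 yx$, $yx^4$ from Lemma~\ref{xxlem1.6}(1) can occur as a subword of $g$. Thus $g$ is already in reduced form, and by Bergman's Diamond Lemma together with Lemma~\ref{xxlem1.6}(3), the word $g$ is literally one of the basis monomials $x^i (yx^3)^j (yx^2)^\epsilon (y^2 x^2)^k y^a x^b$. Since $g$ begins with $y$, we must have $i = 0$. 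On the other hand, left multiplication by $x$ carries each basis monomial to the basis monomial with $i$ incremented by one (the constraints on $(a, b)$ do not depend on $i$), so $x \FF$ is the $\Bbbk$-span of the basis monomials with $i \geq 1$; as $g$ has $i = 0$, we conclude $g \notin x \FF$.

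The case $\ell = y$ follows by symmetry: the algebra $\FF$ admits an automorphism $\sigma$ interchanging $x$ and $y$, because the defining relations in~\eqref{E1.4.1} are swapped by $\sigma$. For $g$ a subword of $(y^2 x^2)^s$ beginning with $x$, the image $\sigma(g)$ is a subword of $(x^2 y^2)^s$ beginning with $y$; the runs in $(x^2 y^2)^s$ also have length at most $2$, so the Diamond Lemma argument of the previous paragraph applies directly to $\sigma(g)$ and yields $\sigma(g) \notin x \FF$. Hence $g \notin \sigma^{-1}(x \FF) = y \FF$, completing the proof of the key claim. The main obstacle I anticipate is precisely this symmetric step, since the Bergman basis of Lemma~\ref{xxlem1.6}(3) is not itself $\sigma$-invariant; invoking $\sigma$ at the level of the key claim (rather than of the basis) sidesteps the need to rerun the Diamond Lemma computation with the reverse lexicographic order.
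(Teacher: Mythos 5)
Your proof is correct. It shares the paper's inductive skeleton---show that $a$ must begin with the same letter as $f$, cancel that letter using that $\FF$ is a domain, and recurse---but you implement the crucial step quite differently. The paper rules out the wrong first letter by direct order comparisons inside the Diamond Lemma: if $f$ begins with $y$ and $a=xa'$, the normal form of $ab$ is lexicographically smaller than the already-reduced word $f$; in the harder case ($f$ beginning with $x$, $a$ beginning with $y$) it multiplies on the left by $yy$ or $yx$ so that a reduction $y^3\to xyx$ or $yxy\to x^3$ fires, again giving an order contradiction---this is why the paper first pads $f$ to degree at least $4$ and splits into four cases according to the opening letters of $f$. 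You instead isolate the structural claim that every subword of $(y^2x^2)^s$ is already a normal-form word, identify $x\FF$ with the span of the basis monomials of Lemma~\ref{xxlem1.6}(3) having $i\geq 1$, and handle $y\FF$ via the swap automorphism $\sigma$, a symmetry of \eqref{E1.4.1} the paper never invokes. Your route is arguably more transparent (left divisibility by a letter is read off from normal forms) and avoids the padding and the four-way case analysis; the paper's route stays entirely within the reduction-order formalism and needs neither the explicit basis nor $\sigma$. The individual steps you rely on all check out: the run-length observation disposes of $y^3$, $y^2x^3$, $yx^4$, while $yxy$ and $yx^2yx$ require the additional (immediate) inspection that they are not among the length-$3$ and length-$5$ contiguous factors of $(y^2x^2)^s$; left multiplication by $x$ permutes the basis monomials because the constraints on $(a,b)$ in Lemma~\ref{xxlem1.6}(3) do not involve $i$; and $(x^2y^2)^s$ again has all runs of length $2$, so the same normal-form argument applies verbatim to $\sigma(g)$.
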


\begin{proof} By changing $s$ to a larger number and adding more letters
to $f$ and $b$ from the right, we may assume that the degree of $f$ is at least
4.  We prove the assertion by induction on the degree of $a$. Nothing
needs to be proved if $a$ has degree 0. Now suppose $\deg a>0$. There are
four different cases for $f$:
$$f=yyxx\cdots, \quad yxxy\cdots, \quad xxyy\cdots, \quad  xyyx\cdots.$$
By Lemma \ref{xxlem1.6}(3), each $f$ is reduced. Suppose $f=yf'$ is in the first
two cases. If $a=ya'$, then, after canceling out $y$, we have $f'=a'b$, and the assertion
follows from the induction. If $a=xa'$, then the reduced form of $ab$ is
less than $f$ in the order used in the  Diamond Lemma, but this is impossible, and so we are done
in this case.  Suppose next that $f=xf'$ is in one of the last two cases. If $a=xa'$,
then, after canceling out $x$, we have $f'=a'b$, and the assertion
follows from the induction. The remaining case is $a=ya'$, and we need to consider the following two
separate cases for $f$.

If $f=xxyy\cdots$ and $a=ya'$, then $yyf$ is a subword of $(y^2x^2)^{s+1}$ and
$y^2ab=y^3 a'b=xyx a'b$, which is less than $yyf$ in the order used in the Diamond
Lemma, but this is impossible.
If $f=xyyx\cdots$ and $a=ya'$, then $yxf=yxxyyx\cdots$ and $yxya'b=
x^3 a'b$, which is less than $yxxyyx\cdots=yxf$ in the order used in the Diamond
Lemma, but this is impossible.

Combining the above assertions, the induction shows that $a$ is a left subword of 
the word $f$.
\end{proof}

\begin{proposition}
\label{xxpro1.8}
Let $A$ be the algebra $\FF$ and $G$ be a nontrivial finite group
coacting on $A$ such that $x$ and $y$ are $G$-homogeneous. Then
the fixed subring $A^{co\; G}$ is not AS regular.
\end{proposition}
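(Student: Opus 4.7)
The plan is to adapt the strategy of Proposition \ref{xxpro1.4} to the $\FF$-presentation, using the more elaborate basis provided by Lemma \ref{xxlem1.6}(3).

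By Lemma \ref{xxlem1.3} we may assume $G$ is non-abelian. Set $a = \deg_G x$ and $b = \deg_G y$; inner faithfulness forces $G = \langle a, b\rangle$. The relations \eqref{E1.4.1} translate into $a^3 = bab$ and $b^3 = aba$ in $G$. Computing $abab = a(bab) = a^4$ and simultaneously $abab = (aba)b = b^4$ gives $a^4 = b^4 =: c$; the relation $yx^4 = x^4 y$ from Lemma \ref{xxlem1.6}(1) shows $c$ commutes with $b$, so $c \in Z(G)$. Let $I, J, K, L$ be the smallest positive integers satisfying $a^I = b^J = (ba)^K = (ab)^L = e$. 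Since $G$ is non-abelian and generated by $\{a, b\}$, none of $a, b, ba, ab$ equals $e$, and so $I, J, K, L > 1$. The four elements $x^I,\ y^J,\ (yx)^K,\ (xy)^L$ then all lie in $A^{co\, G}$, playing the role of $u^i, d^j, (du)^k, (ud)^l$ from the proof of Proposition \ref{xxpro1.4}.

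Assume for contradiction that $A^{co\, G}$ is AS regular. By Lemma \ref{xxlem1.2} it is generated by at most three elements. Since every monomial in $\{x, y\}$ is $G$-homogeneous, we may take the generators to be invariant basis monomials as in Lemma \ref{xxlem1.6}(3), chosen greedily from low degree upward:
$$h_s = x^{i_s}(yx^3)^{j_s}(yx^2)^{\epsilon_s}(y^2x^2)^{k_s}y^{p_s}x^{q_s},\qquad s = 1,2,3.$$
The aim is to reach a contradiction by showing no such triple can simultaneously produce $x^I, y^J, (yx)^K, (xy)^L$. First, some $h_s$ must be a pure power of $x$ (all other exponents zero); otherwise every product of generators retains a $y$ in its basis normal form (the reductions in Lemma \ref{xxlem1.6}(1) never destroy the leftmost $y$), so $x^I$ could not be produced. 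By the $x \leftrightarrow y$ symmetry of \eqref{E1.4.1}, a parallel argument identifies a second generator whose normal form matches that of $y^J$, with shape determined by $J \bmod 4$ via $y^3 = xyx$ and $y^4 = x^4$. The remaining generator would then have to account for both $(yx)^K$ and $(xy)^L$, which have opposite ``handedness'' (leftmost-$y$ versus rightmost-$y$ blocks in their normal forms); applying Lemma \ref{xxlem1.7} to the relevant subword structure shows that no single basis monomial can serve this dual role, giving the desired contradiction.

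The main obstacle is the third step. The identities $(yx)^2 = x^4$, $(xy)^2 = y^4$, and $y^4 = x^4$ from Lemma \ref{xxlem1.6}(2) cause $(yx)^K$ or $(xy)^L$ to collapse to pure $x$- or $y$-powers when $K$ or $L$ is even, in which case one must substitute appropriate higher-degree ``mixed'' invariants (for example, a suitable power of $y^2 x^2$, whose subword structure is precisely what Lemma \ref{xxlem1.7} is designed to handle) that still retain enough asymmetry to be distinguished from a single basis monomial. A careful case analysis on the parities of $K, L$ and the order of $c$ in $Z(G)$ then closes the argument.
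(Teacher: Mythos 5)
There is a genuine gap, and it sits exactly where you flag uncertainty. Two of your key steps fail for $\FF$ precisely because its relations destroy $y$'s. First, your parenthetical claim that ``the reductions in Lemma \ref{xxlem1.6}(1) never destroy the leftmost $y$'' is false: the reduction $yxy\to x^3$ (and hence $yxyx\to x^4$) eliminates $y$'s entirely, so a product of generators none of which is a pure $x$-power can still reduce to a pure $x$-power --- for instance $(xyx)^4=x^{12}$, and $xyx$ is $G$-invariant whenever $(\deg_G y)^3=e$. So the deduction that some $h_s$ must be $x^{I}$ does not go through. Second, the four invariants $x^I, y^J, (yx)^K, (xy)^L$ are too degenerate to force four generators: by Lemma \ref{xxlem1.6}(2) one has $(yx)^2=(xy)^2=x^4=y^4$, so $(yx)^K$ and $(xy)^L$ collapse to powers of $x$ for even $K,L$, and even for odd $K,L$ their normal forms are $x^{2K-2}yx$ and $x^{2L-1}y$, which are not subwords of $(y^2x^2)^s$ --- so Lemma \ref{xxlem1.7}, which is stated only for such subwords, gives you no control over them. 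The ``careful case analysis'' you defer is not a routine cleanup; it is the entire difficulty, and the quadruple you chose cannot support it.

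The paper's proof sidesteps both problems by choosing a different quadruple: the four cyclic rotations of $(yyxx)^s$ (for $s$ with $(\deg_G(y^2x^2))^s=e$), namely $f_1=(yyxx)^s$, $f_2=yxx(yyxx)^{s-1}y$, $f_3=xx(yyxx)^{s-1}yy$, $f_4=x(yyxx)^{s-1}yyx$. All four are $G$-invariant (their $G$-degrees are conjugates of $e$), all four are already reduced basis monomials by Lemma \ref{xxlem1.6}(3), and their initial segments begin $yy$, $yx$, $xx$, $xy$ respectively. Viewing $\FF$ as the semigroup algebra $\Bbbk T$ so that $\FF^{co\,G}=\Bbbk N$ is minimally generated by a set $S$ of monomials, each $f_i$ factors as $a_ib_i$ with $a_i\in S$, and Lemma \ref{xxlem1.7} --- which is tailored exactly to subwords of $(y^2x^2)^s$ --- forces each $a_i$ to be a genuine left subword of $f_i$; the four distinct starting patterns then make $a_1,a_2,a_3,a_4$ pairwise distinct, so $|S|\geq 4$, contradicting Lemma \ref{xxlem1.2}. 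If you want to salvage your outline, you should replace $x^I, y^J, (yx)^K, (xy)^L$ with invariants whose normal forms Lemma \ref{xxlem1.7} actually controls; at that point you will have essentially reconstructed the paper's argument.
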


\begin{proof} By Lemma \ref{xxlem1.3}, we need to
consider only the case when $G$ is not abelian. Assume to the contrary that
$\FF^{co\; G}$ is AS regular. By \cite[Lemma 3.3(2)]{KKZ5},
$\FF^{co\; G}$ has global dimension three.
Note that $\FF$ is a semigroup
algebra $\Bbbk T$ for the semigroup
$$T=\langle x,y\mid x^3=yxy, y^3=xyx\rangle$$
and $G$ is a finite factor group $T/N$ for some normal
subsemigroup $N$. We have
\begin{enumerate}
\item[(1)]
The fixed subring $\FF^{co\; G}$ is the semigroup ring $\Bbbk N$.
\item[(2)]
$\FF^{co\; G}$ is minimally generated by a finite subset
$S\subset N$.
\item[(3)]
Every monomial in $\FF^{co \; G}$ is a product of elements in $S$.
\end{enumerate}
Note that we have identified a monomial in $\FF$ with an element in
$T$. Let $g_1$ be the image of $x$ in $G$ and $g_2$ be the image
of $y$ in $G$. Then there is an $s>1$ such that
$(g_2g_2 g_1g_1)^s\in N$, or equivalently,  $(g_2g_2 g_1g_1)^s=e$
in $G$. Then $f_1=(yyxx)^s\in \FF^{co\; G}$. Similarly we have
three other monomials in $\FF^{co\; G}$:
$$\begin{aligned}
f_2&= yxx(yyxx)^{s-1} y,\\
f_3&= xx(yyxx)^{s-1} yy,\\
f_4&= x(yyxx)^{s-1} yyx.
\end{aligned}
$$
Then $f_1,f_2,f_3,f_4$ are four elements in $\FF^{co\; G}$.
Since $f_1$ is in $\FF^{co\; G}$, $f_1=a_1 b_1$ where $a_1$
is in the set $S$. Similarly, we have $a_i \in S$ such that
$f_i=a_i b_i$ for $i=2,3,4$. By Lemma \ref{xxlem1.7},
$a_i$ is a left subword of $f_i$. By Lemma \ref{xxlem1.6}(3),
as left subwords of $f_1,f_2,f_3,f_4$ respectively, $a_1,a_2,a_3,a_4$,
are reduced and linearly independent. Therefore the order of
$S$ is at least 4, which contradicts Lemma \ref{xxlem1.2}. Therefore
$\FF^{co\; G}$ is not AS regular.
\end{proof}

We consider another algebra $\HH$ that is generated by
$x$ and $y$ subject to the relations
$$\begin{aligned}
x^2 y+yx^2 -2y^3&=0,\\
-2x^3+xy^2+y^2 x&=0.
\end{aligned}
$$

The following lemma is similar to Lemma \ref{xxlem1.5}.

\begin{lemma}
\label{xxlem1.9} Retain the above notation.
\begin{enumerate}
\item[(1)]
$\HH$ is isomorphic to $\DD(-2,-1)$.
\item[(2)]
$\HH$ is a graded noetherian AS regular domain of global dimension three
with Hilbert series $((1-t)^2(1-t^2))^{-1}$.
\item[(3)]
For each $n>0$, elements $(xy)^n$ and $(yx)^n$ are linearly independent.
\item[(4)]
For each $n\geq 0$, elements $y(xy)^n$ and $x(yx)^n$ are linearly independent.
\end{enumerate}
\end{lemma}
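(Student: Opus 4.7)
The approach follows the pattern of Lemma \ref{xxlem1.5}. For (1), I would perform the change of variables $x = u + d$, $y = u - d$. A direct expansion of $x^2 y + y x^2 - 2 y^3$ and $x y^2 + y^2 x - 2 x^3$ in $u$ and $d$ shows that the two relations of $\HH$ become
$$u^2 d + 2 u d u + d u^2 = 0, \qquad u d^2 + 2 d u d + d^2 u = 0,$$
which are the defining relations of $\DD(-2, -1)$. Part (2) is then immediate from (1) and Lemma \ref{xxlem1.1}.

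For (3) and (4), I would identify $\HH$ with $A := \DD(-2, -1)$ and use the $\mathbb{N}^2$-grading with $\deg u = (1, 0)$, $\deg d = (0, 1)$; both defining relations are bihomogeneous, so $A$ is bigraded. Working with the PBW basis $\{u^i (du)^j d^k\}$ of Lemma \ref{xxlem1.1}, a short induction on $l$ based on $u^2 d + 2 u d u + d u^2 = 0$ yields the reduction formula
$$d u^l = (-1)^{l+1} \bigl[(l - 1) u^l d + l\, u^{l-1} d u\bigr] \quad (l \geq 0),$$
so every monomial $u^m d u^l$ reduces to the two-element basis $\{u^{m+l} d,\; u^{m+l-1} d u\}$ of the bidegree-$(m+l, 1)$ piece.

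To prove (3), write $xy = u^2 - ud + du - d^2$ and $yx = u^2 + ud - du - d^2$, and suppose $a (xy)^n + b (yx)^n = 0$. The bidegree $(2n, 0)$-component gives $(a + b) u^{2n} = 0$, so $a + b = 0$. The bidegree $(2n - 1, 1)$-component equals $(a - b) \sum_{k=1}^n u^{2(k-1)} (d u - u d) u^{2(n - k)}$; applying the reduction formula to each summand and performing the sum yields
$$(a - b) \cdot n \bigl[(2n - 3) u^{2n - 1} d + (2n - 1) u^{2n - 2} d u\bigr] = 0,$$
whose coefficient of $u^{2n - 2} d u$ is $(a - b)\, n\, (2n - 1)$. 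Since $n(2n - 1) \neq 0$ for $n \geq 1$, we obtain $a = b$, hence $a = b = 0$. Part (4) is parallel: $n = 0$ is immediate since $y = u - d$ and $x = u + d$ are linearly independent, and for $n \geq 1$ the bidegree $(2n + 1, 0)$-component forces $a + b = 0$, while the bidegree $(2n, 1)$-component reduces (via the same identity) to $(a - b)(2n + 1)[(n - 1) u^{2n} d + n u^{2n - 1} d u] = 0$, whose $u^{2n - 1} d u$ coefficient $n(2n + 1)$ is nonzero, forcing $a = b$. The main obstacle is establishing the reduction formula for $d u^l$ cleanly and executing the summation over $k$ without arithmetic error; once the closed form is in hand, the rest is routine bookkeeping.
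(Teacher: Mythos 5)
Your proof is correct, and for the interesting parts it takes a genuinely different route from the paper. Parts (1) and (2) are essentially the paper's argument (the paper substitutes $u=x-y$, $d=x+y$ rather than your $x=u+d$, $y=u-d$; with either choice each relation of $\HH$ expands to a linear combination of the two relations of $\DD(-2,-1)$ and vice versa, so the defining ideals coincide --- your phrasing ``the two relations become'' is slightly loose but harmless). For parts (3) and (4) the paper argues structurally: for $d\gg n$ it puts a $D_{2d}$-grading on $\HH$ by $\deg_G x=a$, $\deg_G y=b$ (the relations are visibly homogeneous since $a^2=b^2=e$ kills $x^2$ and $y^2$ in degree), notes that $(xy)^n$ and $(yx)^n$ then lie in the distinct graded components indexed by $(ab)^n\neq(ba)^n$, and concludes linear independence because both elements are nonzero in a domain. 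You instead compute directly in $\DD(-2,-1)$ with the $\mathbb{N}^2$-bigrading and the PBW basis of Lemma \ref{xxlem1.1}(3); I verified your reduction identity $du^l=(-1)^{l+1}\bigl[(l-1)u^ld+l\,u^{l-1}du\bigr]$ by induction, the evaluation $u^{2k-2}(du-ud)u^{2(n-k)}=(2m-1)u^{2n-1}d+(2m+1)u^{2n-2}du$ with $m=2(n-k)$, the sums $\sum_k(2m-1)=n(2n-3)$ and $\sum_k(2m+1)=n(2n-1)$, and the analogous coefficients $(2n+1)(n-1)$ and $n(2n+1)$ in part (4); all are correct, and the nonvanishing of $n(2n-1)$ and $n(2n+1)$ in characteristic zero closes the argument. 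The paper's method is shorter and applies uniformly to any two words whose images differ in a suitable quotient group; yours is more elementary and self-contained (it needs only the PBW basis, not the domain property or an auxiliary coaction) and produces the explicit leading terms of $(xy)^n\pm(yx)^n$, which is more information than linear independence alone. Both are complete proofs.
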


\begin{proof} (1) Set $u=x-y$ and $d=x+y$, then the two relations
$u^2d=-2udu-du^2$ and $ud^2=-2dud-d^2u$ are equivalent to the two relations
$x^2 y+yx^2 -2y^3=0$ and
$-2x^3+xy^2+y^2 x=0$.

(2) Known for $\DD(\alpha, \beta)$ for all $\beta\neq 0$.

(3) For any $d \geq 2$, let $D_{2d}$ be the dihedral group of order $2d$ 
generated by $a$ and $b$ subject to the relations $a^2=b^2=(ab)^d=e$
where $e$ is the unit of $D_{2d}$.
Consider the $G:=D_{2d}$-coaction on $A$ obtained by setting
$\deg_G x=a$ and $\deg_G y=b$.  It is clear that $A$ is $G$-graded.  Choosing $d\gg n$, then
$$\deg_G((xy)^n)=(ab)^n\neq (ba)^n =\deg_G ((yx)^n),$$
which implies the assertion.

(4) The proof is similar to the proof of part (3), and so it is omitted.
\end{proof}

\begin{proposition}
\label{xxpro1.10}
Let $A$ be the algebra $\HH$ and $G$ be a nontrivial finite group
coacting on $A$ such that $x$ and $y$ are $G$-homogeneous. Then
the fixed subring $A^{co\; G}$ is not AS regular.
\end{proposition}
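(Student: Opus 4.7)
The plan is to follow the strategy of Proposition \ref{xxpro1.8}, adapted to $\HH$, with Lemma \ref{xxlem1.9} playing the role that Bergman's Diamond Lemma (Lemmas \ref{xxlem1.6}--\ref{xxlem1.7}) played for $\FF$. First, by Lemma \ref{xxlem1.3} we may reduce to the case that $G$ is non-abelian. Set $a=\deg_G x$ and $b=\deg_G y$. Since each defining relation of $\HH$ is a three-term sum of monomials, $G$-homogeneity forces all three summands to share a common $G$-degree, giving $a^2b=ba^2=b^3$. Hence $a^2=b^2$ and $a^2$ commutes with $b$. Writing $c:=a^2=b^2$, we conclude that $c$ is central in $G$. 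Let $k$ denote the common order of $ab$ and $ba$ in $G$, and let $m$ denote the order of $c$. Since $G$ is non-abelian, $k\geq 2$ and $ab\neq ba$.

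Next, assume for contradiction that $\HH^{co\; G}$ is AS regular. By \cite[Lemma 3.3(2)]{KKZ5} it has global dimension three, so Lemma \ref{xxlem1.2} produces a minimal homogeneous generating set $S$ with $|S|\leq 3$. The goal is to exhibit four linearly independent fixed elements that cannot be produced from any such $S$, giving the desired contradiction.

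The candidate obstruction elements are
$x^{2m}$, $y^{2m}$, $(xy)^k$, $(yx)^k$,
all of which lie in $\HH^{co\; G}$ (their $G$-degrees are $c^m=e$ and $(ab)^k=(ba)^k=e$, respectively). Lemma \ref{xxlem1.9}(3) gives the $\Bbbk$-linear independence of $(xy)^k$ and $(yx)^k$ in $\HH$, and a specialization to a sufficiently large abelian $G$-coaction analogous to the one used in the proof of Lemma \ref{xxlem1.9}(3) shows that $x^{2m}$ and $y^{2m}$ are likewise independent. The generator-count step then mimics Proposition \ref{xxpro1.4}: each $h\in S$ is a $G$-homogeneous (and hence degree-$e$) $\Bbbk$-linear combination of monomials in $x,y$, so by sorting generators by their leading monomial types in the domain $\HH$ (Lemma \ref{xxlem1.9}(2)) and invoking Lemma \ref{xxlem1.9}(3),(4) to preclude coincidental cancellations, one shows that the four obstructions cannot simultaneously arise as noncommutative polynomials in three generators. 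This yields $|S|\geq 4$, contradicting Lemma \ref{xxlem1.2}.

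The principal obstacle is the final generator-count step: the non-monomial nature of the relations in $\HH$ precludes any clean Bergman-style reduction analogous to Lemmas \ref{xxlem1.6}--\ref{xxlem1.7}. The strategy is to separate the four obstructions by their ``alternation patterns'' in $x,y$ (pure powers vs.\ alternating words, and starting with $x$ vs.\ starting with $y$), and to use the linear independence provided by Lemma \ref{xxlem1.9}(3),(4) as a substitute for monomial uniqueness. An alternative route is via Hilbert series: compute $H_{\HH^{co\; G}}(t)$ through a Molien-type formula for the $G$-coaction and compare with the Hilbert series of any graded AS regular algebra of global dimension three generated by at most three elements, extracting a direct numerical contradiction in at least one representative case (e.g., $G$ dihedral with $c=e$, or $G$ quaternion with $m=2$, $k=4$).
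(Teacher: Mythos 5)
Your proposal does not close the argument, and you say so yourself: the ``generator-count step'' is exactly where the proof has to happen, and for $\HH$ there is no Diamond-Lemma normal form (nothing like Lemmas \ref{xxlem1.6}--\ref{xxlem1.7}) to make the count go through. Exhibiting the four fixed elements $x^{2m}$, $y^{2m}$, $(xy)^k$, $(yx)^k$ is not by itself an obstruction: a three-element generating set could in principle produce all four (e.g.\ if $c=e$ then $x^2,y^2\in\HH^{co\,G}$ already account for the first two), and ruling this out requires controlling how arbitrary products of $G$-homogeneous generators decompose in $\HH$ --- precisely the information that Lemma \ref{xxlem1.9}(3),(4) (two isolated linear-independence statements) does not provide. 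The fallback via Molien series ``in at least one representative case'' would not prove the proposition for all $G$. You also never pin down the group: you keep a central element $c=g_1^2=g_2^2$ of unknown order $m$, whereas the structure of $G$ has to be nailed down for any counting to work.

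The paper's actual proof avoids generator counting entirely and instead uses the structure theory of \cite[Theorem 0.3]{KKZ5} for coactions with AS regular fixed ring. First, if $a:=g_1^2=g_2^2\neq e$, then $x^2$ and $y^2$ lie in the component $A_a$, which by \cite[Theorem 0.3(1)]{KKZ5} is free of rank one over $A^{co\,G}$; this forces $A_a=x^2A^{co\,G}=y^2A^{co\,G}$, contradicting the linear independence of $x^2$ and $y^2$. Hence $g_1^2=g_2^2=e$ and $G\cong D_{2n}$. Then, taking $m$ to be the longest element of $D_{2n}$ in the Coxeter generators $\{g_1,g_2\}$, Lemma \ref{xxlem1.9}(3),(4) produces two linearly independent elements of degree $n$ in $A_m$, while the minimal degree in $A_m$ is $n$; this contradicts \cite[Theorem 0.3(4)]{KKZ5}. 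You should replace the counting strategy with this rank-one-freeness argument (or supply a genuine normal-form analysis for $\HH$, which the paper deliberately does not attempt).
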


\begin{proof} By Lemma \ref{xxlem1.3}, we may assume that
$G$ is non-abelian. Suppose to the contrary that $A^{co\; G}$ is
AS regular. Let $g_1:= \deg_G x$ and $g_2:=\deg_G y$.  The 
$G$-grading forces $g_1^2=g_2^2$.  Let $a:=g_1^2=g_2^2$. If 
$a\neq e$, then both $x^2$ and $y^2$
are in the same $G$-graded component $A_{a}$. By \cite[Theorem 0.3(1)]{KKZ5},
$A_{a}$ is free of rank 1 over
$A^{co\; G}$. Thus $A_a=x^2 A^{co\; G}$ and
$A_a=y^2 A^{co\; G}$. This contradicts the fact that $x^2$ and $y^2$ are
linearly independent. Therefore $g_1^2=g_2^2=e$. As a consequence,
$G$ is isomorphic to the dihedral group $D_{2n}$ of order $2n$,
for some $n \geq 2$. When $n$ is odd the unique largest length element 
of $G$ with respect to the Coxeter generating set $\{g_1, g_2\}$ is
$m=(g_1 g_2)^{(n-1)/2} g_1=(g_2g_1)^{(n-1)/2} g_2$,
while when $n$ is even it
is $m=(g_1 g_2)^{n/2} =(g_2g_1)^{n/2}$, which is central.
By Lemma \ref{xxlem1.9}(3,4),
when $n$ is odd, $(xy)^{(n-1)/2} x$ and $(yx)^{(n-1)/2}y$
are linearly independent elements of degree $n$ in the $G$-graded 
component $A_m$, and when $n$ is even, $(xy)^{n/2}$ and $(yx)^{n/2}$
are linearly independent elements of degree $n$ in $A_m$. But the smallest
degree of elements in $A_m$ is $n$. This yields a
contradiction by \cite[Theorem 0.3(4)]{KKZ5}.
\end{proof}

For the rest of this section we assume that $G$ is a finite
group coacting on the down-up algebra $\DD(\alpha,\beta)$.

\begin{lemma}
\label{xxlem1.11} 
Let $A$ be the connected graded algebra $\DD(\alpha,\beta)$
with $G$-coaction. Let $x_1$ and $x_2$ be two linearly 
independent $G$-homogeneous elements in $A_1$. Suppose there 
are two nontrivial relations that hold in $A$:
$$f_1:=\sum_{i,j,k=1}^2 c_{i,j,k} x_i x_j x_k=0,$$
and
$$f_2:=\sum_{i,j,k=1}^2 e_{i,j,k} x_i x_j x_k=0$$
such that $c_{i,j,k}\neq 0$ and $e_{i,j,k}=0$
for some $(i,j,k)$. Then,  for
all $(i,j,k)$ with $e_{i,j,k}\neq 0$, the 
$x_ix_jx_k$ have the same $G$-degree.
\end{lemma}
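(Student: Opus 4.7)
The plan is to exploit that a graded noetherian down-up algebra has a very small space of cubic relations. By Lemma \ref{xxlem1.1}(2), the Hilbert series of $A$ gives $\dim A_3 = 6$, while the degree-three part of the free algebra $\Bbbk\langle x_1,x_2\rangle$ has dimension $8$, so the kernel $I_3$ of the natural surjection $\Bbbk\langle x_1,x_2\rangle_3 \twoheadrightarrow A_3$ is two-dimensional. Setting $g_i := \deg_G x_i$, the $G$-coaction equips $\Bbbk\langle x_1,x_2\rangle$ with a $G$-grading under which the surjection onto $A$ becomes a map of $G$-graded algebras, so $I_3$ is a $G$-graded subspace; in particular both $f_1$ and $f_2$ lie in $I_3$.

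Write $f_2 = \sum_{h \in S} f_2^{(h)}$ as the sum of its (nonzero) $G$-homogeneous components. Each $f_2^{(h)}$ lies in $I_3$, and components of distinct $G$-degrees are linearly independent, so $|S| \leq \dim I_3 = 2$. If $|S| = 1$, then $f_2$ is already $G$-homogeneous and every $(i,j,k)$ with $e_{i,j,k} \neq 0$ yields a monomial of the single $G$-degree in $S$, which is exactly the conclusion. Suppose instead that $|S| = 2$, say $S = \{h_1, h_2\}$. Then $\{f_2^{(h_1)}, f_2^{(h_2)}\}$ is a basis of $I_3$, and there are scalars $\lambda_1, \lambda_2 \in \Bbbk$ such that $f_1 = \lambda_1 f_2^{(h_1)} + \lambda_2 f_2^{(h_2)}$.

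To derive a contradiction, fix $(i_0,j_0,k_0)$ with $c_{i_0,j_0,k_0} \neq 0$ and $e_{i_0,j_0,k_0} = 0$, and set $h := g_{i_0} g_{j_0} g_{k_0}$. Since $f_1$ is supported only in $G$-degrees $h_1$ and $h_2$, the nonvanishing of $c_{i_0,j_0,k_0}$ forces $h \in \{h_1,h_2\}$; without loss of generality $h = h_1$. Because $x_{i_0} x_{j_0} x_{k_0}$ has $G$-degree $h_1$, its coefficient in $f_2 = f_2^{(h_1)} + f_2^{(h_2)}$ equals its coefficient in $f_2^{(h_1)}$, while its coefficient in $f_1 = \lambda_1 f_2^{(h_1)} + \lambda_2 f_2^{(h_2)}$ equals $\lambda_1$ times the same quantity. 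This gives $c_{i_0,j_0,k_0} = \lambda_1 \, e_{i_0,j_0,k_0} = 0$, contradicting $c_{i_0,j_0,k_0} \neq 0$; hence $|S| = 1$.

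The only step that is not essentially a dimension count is the verification that $I_3$ is $G$-graded, which uses that the coaction is defined so that $\Bbbk\langle x_1,x_2\rangle \to A$ respects $G$-degrees. I do not anticipate any real obstacle beyond carefully distinguishing the formal cubic expression $f_2$ in the free algebra from its (zero) image in $A$.
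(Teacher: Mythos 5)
Your proof is correct and follows essentially the same route as the paper's: both arguments rest on the fact that the space of degree-three relations of $\DD(\alpha,\beta)$ is two-dimensional and is $G$-graded, so that a relation with two distinct $G$-homogeneous components would, together with $f_1$, overfill that space. Your write-up is in fact slightly more explicit than the paper's at the final step, since you verify via the coefficient of $x_{i_0}x_{j_0}x_{k_0}$ that $f_1$ cannot lie in the span of the homogeneous pieces of $f_2$, a point the paper asserts without elaboration.
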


\begin{proof}
Since the monomial $x_ix_jx_k$ does not appear in the relation 
$f_2$ with nonzero coefficient, $f_1$ and $f_2$ are linearly 
independent.  If $f_2$ contains two monomials with nonzero 
coefficients and different $G$-degrees, then $f_2$ must be a sum 
of  $G$-homogeneous pieces $g_1, \dots, g_n$ for $n \geq 2$, with 
each $G$-homogeneous piece a relation in $A$ of degree 3.  But 
then $f_1, g_1$, and $g_2$ are three linearly independent relations 
in $A$ of degree 3, which is a contradiction.
\end{proof}

\begin{proposition}
\label{xxpro1.12}
Suppose $G$ is a finite non-cyclic group coacting on
$A:=\DD(\alpha,\beta)$ homogeneously and inner faithfully.
Then one of the following occurs.
\begin{enumerate}
\item[(1)]
$\alpha=0$ and $u$ and $d$ are $G$-homogeneous after
a change of variables.
\item[(2)]
$A$ is isomorphic to $\FF$ and using the generators of $\FF$,
both $x$ and $y$ are $G$-homogeneous.
\item[(3)]
$A$ is isomorphic to $\HH$ and using the generators of $\HH$,
both $x$ and $y$ are $G$-homogeneous.
\item[(4)]
$G$ is abelian and there are linearly independent elements
$x$ and $y$ of $\DD(\alpha, -1)$ of degree one
such that
$$\begin{aligned}
\alpha x^2 y+(-2-\alpha) xyx+ \alpha yx^2 +(2-\alpha) y^3&=0,\\
(2-\alpha) x^3+ \alpha xy^2+(-2-\alpha) yxy+ \alpha y^2 x&=0
\end{aligned}
$$
and  $x$ and $y$ are $G$-homogeneous.
\item[(5)]
$G$ is abelian and $u$ and $d$ are $G$-homogeneous after a change
of variables.
\end{enumerate}
\end{proposition}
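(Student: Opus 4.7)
My plan is to pick a $G$-homogeneous $\Bbbk$-basis $\{x,y\}$ of $A_1$ and analyze the two defining cubic relations of $\DD(\alpha,\beta)$ when expressed in this basis. Set $g_1:=\deg_G x$ and $g_2:=\deg_G y$. Inner faithfulness plus the fact that $A$ is generated in degree one give $G=\langle g_1,g_2\rangle$, and non-cyclicity then forces $g_1\neq g_2$ with both nontrivial. By Lemma \ref{xxlem1.1} the cubic relation space $R\subset A_1^{\otimes 3}$ is two-dimensional, and the $G$-coaction forces $R$ to be a $G$-graded subspace of $A_1^{\otimes 3}$. Writing $u=a_1x+a_2y$ and $d=b_1x+b_2y$ with $a_1b_2-a_2b_1\neq 0$ and expanding the down-up relations yields two cubics $F_1, F_2$ in the eight monomials of $T(A_1)_3$ whose coefficients are polynomials in $a_i,b_j,\alpha,\beta$. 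Lemma \ref{xxlem1.11} provides the main leverage: a zero coefficient in one of $F_1,F_2$ paired with a nonzero coefficient in the other forces the first to be $G$-homogeneous, so coefficient patterns are tightly constrained.

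The case analysis is driven by partitioning the eight cubic monomials into $G$-degree classes. If $G$ is abelian, the nominal classes are $\{x^3\}$, $\{x^2y,xyx,yx^2\}$, $\{xy^2,yxy,y^2x\}$, $\{y^3\}$. A direct computation shows the $x^3$ and $y^3$ coefficients of $F_1,F_2$ have the respective forms $a_1^{2}b_1(1-\alpha-\beta)$, $a_2^{2}b_2(1-\alpha-\beta)$, $a_1b_1^{2}(1-\alpha-\beta)$, $a_2b_2^{2}(1-\alpha-\beta)$; when $g_1^2\neq g_2^2$ these singleton classes cannot contribute to $R$ (since $A$ is a domain), and the resulting equations force either $u,d$ already proportional to $\{x,y\}$ in some order, giving Case (5), or $\alpha+\beta=1$ which, after further constraints from the 2-dimensionality of $R$, collapses into Case (5) as well. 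When $g_1^2=g_2^2$, the four classes merge into two of size four, and the requirement that the $G$-graded decomposition of $R$ remain two-dimensional pins down $\beta=-1$ and produces exactly the symmetric pair of relations in Case (4). If $G$ is non-abelian, then $x^2y,xyx,yx^2$ have pairwise distinct $G$-degrees, as do $xy^2,yxy,y^2x$; the only allowable coincidences come from $g_1^2$ commuting with $g_2$ or from $g_1^2=g_2^2$. Systematically running through these coincidence patterns, using Lemma \ref{xxlem1.11} to eliminate most of them, leaves exactly three possibilities: $\alpha=0$ with $u,d$ themselves $G$-homogeneous (Case (1)); the shape of the defining relations of $\FF$ from Lemma \ref{xxlem1.5}, identifying $A\cong\DD_{-1}$ with $x,y$ as the generators of $\FF$ (Case (2)); or the shape of the defining relations of $\HH$ from Lemma \ref{xxlem1.9}, identifying $A\cong\DD(-2,-1)$ with $x,y$ as the generators of $\HH$ (Case (3)).

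The hardest part will be the non-abelian subcase with nontrivial change of basis: one must solve for $(a_i,b_j,\alpha,\beta)$ matching the sparse coefficient pattern dictated by the $G$-degree coincidences and verify that the only solutions, up to swapping $x,y$ and rescaling, realize $A$ as $\FF$ or $\HH$ under the explicit linear change of variables of Lemmas \ref{xxlem1.5}(1) and \ref{xxlem1.9}(1). This is a bounded but delicate algebraic bookkeeping exercise; the key tool throughout is Lemma \ref{xxlem1.11}, which forces $G$-homogeneity from a single vanishing coefficient, and the explicit presentations of $\FF$ and $\HH$, which are used to perform the final identification of $A$ and read off the values of $(\alpha,\beta)$. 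The fact that $\beta=-1$ surfaces in Cases (2), (3), and (4) reflects that each of these cases requires a nontrivial coincidence between powers of $g_1$ and $g_2$, and these coincidences, when fed back into the coefficient equations, force exactly this value of $\beta$.
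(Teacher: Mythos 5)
The proposal follows essentially the same strategy as the paper: fix a $G$-homogeneous basis $\{x,y\}$ of $A_1$, expand the two down-up relations in that basis, and use Lemma \ref{xxlem1.11} to force the sparse coefficient patterns that single out cases (1)--(5), with $\FF$ and $\HH$ emerging exactly as in the paper; the only difference is organizational (the student indexes the case split by $G$-degree coincidences among the eight cubic monomials, while the paper splits by the normal form of the change of basis and the values of $(\alpha,\beta)$). The deferred ``bookkeeping'' is precisely the computation the paper carries out, and the concrete claims the student does record (e.g.\ the common factor $1-\alpha-\beta$ in the $x^3$ and $y^3$ coefficients, and the fact that $\beta=-1$ is forced whenever a nontrivial degree coincidence occurs) are correct.
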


\begin{proof} Each of the five parts listed in
 (1-5) can occur. Part (5) could occur most often, so,
for the rest of the proof, we implicitly assume that we are
not in the situation of {\bf part (5)}.

Write $A_1=\Bbbk x+ \Bbbk y$ where $x,y$ are $G$-homogeneous.  
Then $g_1:=\deg_G x$ and $g_2:=\deg_G y$ generate $G$. Since 
$G$ is not cyclic, we have
$$e \neq g_1\neq g_2\neq e.$$

{\bf Case 1 $\alpha = 0, \beta=1$:} First, we assume that $A=\DD_1$.
Then, for any two linearly independent elements $x,y$ of $A$
of degree 1, one can check that $x^2$ and $y^2$ are central.
Therefore we can assume that $x$ and $y$ are $G$-homogeneous,
by the second paragraph. After changing $\{u,d\}$ to
$\{x,y\}$, we can assume that $u$ and $d$ are $G$-homogeneous.
Thus {\bf part (1)} holds for  $A=\DD_1$.

{\bf Case 2 $\alpha = 0, \beta \neq 1$:} Secondly, 
we assume that $A=\DD_{\beta}$ where $\beta\neq 1$.
As noted in the second paragraph, there are two elements $x$ and $y$
in degree 1 with different $G$-grades. We consider two cases.

{\bf Case 2a:}  $x=cu$ and $y=au+b d$ for some $a,b,c\in \Bbbk$. Since $x$ and
$y$ are linearly independent, $bc\neq 0$. If $a=0$, then we can choose
$x=u$ and $y=d$ after a change of variables, and the assertion in
{\bf part (1)} follows. Now we assume $abc\neq 0$. Up to another change
of variables, we have $x=u$ and $y=u+d$, or equivalently, $u=x$
and $d=y-x$. Then the two relations \eqref{E0.0.1} and \eqref{E0.0.2} become
\begin{align}
\label{E1.12.1}\tag{E1.12.1}
x^2 y-\beta y x^2+(\beta-1) x^3&=0,\\
\label{E1.12.2}\tag{E1.12.2}
xy^2 -\beta y^2 x+(1-\beta) x^3&=x^2 y+(1-\beta)xyx -\beta yx^2.
\end{align}
Combining these two relations, one obtains that
\begin{equation}
\label{E1.12.3}\tag{E1.12.3}
xy^2 -\beta y^2 x+(\beta-1)xyx=0.
\end{equation}
Then \eqref{E1.12.1} must be
$G$-homogeneous by Lemma \ref{xxlem1.11}. As a
consequence, $x$ and $y$ have the same $G$-grade,
which contradicts the fact that $G$ is not cyclic.

{\bf Case 2b:} Up to a change of variables, the remaining
case is when $u=x-y$ and $d=x-ay$ where $a\neq 0,1$.
The two relations \eqref{E0.0.1} and \eqref{E0.0.2} become
$$(1-\beta)x^3+(-a+\beta)x^2y+(-1+\beta)xyx
+(-1+a\beta)yx^2$$
$$+(a-\beta)xy^2
+(a-a\beta) yxy +(1-a\beta) y^2 x+(-a+a\beta)y^3=0$$
and
$$(1-\beta)x^3+(-a+\beta)x^2y+(-a+a\beta)xyx
+(-1+a\beta) yx^2$$
$$+(a^2-a\beta)xy^2+(a-a\beta)yxy+(a-a^2\beta)y^2x
+(-a^2+a^2\beta)y^3=0.$$
Since $a \neq 1$, by linear combination, we have
$$(1-\beta)x^3+(-a+\beta)x^2y+(-1+a\beta)yx^2+a(1-\beta) yxy=0$$
and
$$(1-\beta)xyx+(-a+\beta)xy^2+(-1+a\beta)y^2x
+a(1-\beta)y^3=0.$$
Note that $1-\beta\neq 0$. Since $G$ is not cyclic,
$-a+\beta=0$ and $-1+a\beta=0$ by Lemma \ref{xxlem1.11}.
Thus $\beta=a=-1$, and we have the relations $$x^3-yxy=0 \mbox{ and } y^3-xyx=0$$
which is {\bf part (2)}.

{\bf Case 3 $\alpha \beta \neq 0$:} Thirdly, we assume that $A=\DD(\alpha,\beta)$ where
$\alpha \beta\neq 0$. As before we  need
to consider two cases by Lemma \ref{xxlem1.11}.

{\bf Case 3a}: Let $x=u$ and $y=u+d$, or equivalently
$u=x$ and $d=y-x$. The relations \eqref{E0.0.1}
and \eqref{E0.0.2} become
$$x^2y -\alpha xyx -\beta yx^2+(-1+\alpha+\beta) x^3=0$$
and
$$(1-\alpha-\beta)x^3+(-1+\alpha)x^2y+(-1+\beta)xyx$$
$$+(\alpha+\beta) yx^2+ xy^2+(-\alpha) yxy+(-\beta)y^2 x=0.$$
By adding these two relations, we obtain that
$$\alpha x^2y +(-1-\alpha+\beta) xyx+\alpha yx^2
+ xy^2-\alpha yxy-\beta y^2 x=0.$$
If $-1+\alpha+\beta\neq 0$, this forces  $\deg_G(x^2y)=\deg_G(xy^2)$.
This implies that $\deg_G(x)=\deg_G(y)$ and $G$ is cyclic,
a contradiction. If $-1+\alpha+\beta=0$, we still have
$\deg_G(x^2y)=\deg_G(xyx)$, which implies that $G$ is abelian.
Further, by Lemma \ref{xxlem1.11}, up to a common scalar,
the relation 
$$x^2y -\alpha xyx -\beta yx^2=0$$ 
must coincide with the relation 
$$\alpha x^2y +(-1-\alpha+\beta) xyx+\alpha yx^2=0.$$
As a consequence, $(\alpha, \beta)=(2,-1)$, and we have (a special case of) 
{\bf part (4)}.

{\bf Case 3b:} The remaining case is when $u=x-y$ and $d=x-ay$ where
$a\neq 0,1$. Using these generators to expand the relations \eqref{E0.0.1}
and \eqref{E0.0.2} we have
$$\begin{aligned}
(1-\alpha-\beta)x^3&+(-a+\alpha+\beta) x^2y
+(-1+a\alpha+\beta) xyx+(-1+\alpha+a\beta) yx^2\\
+(a-a\alpha-\beta) &xy^2+(a-\alpha-a\beta) yxy
+(1-a\alpha-a\beta)y^2 x+a(-1+\alpha+\beta) y^3=0\\
\end{aligned}$$
{and}\\
$$\begin{aligned}
(1-\alpha-\beta)x^3&+(-a+a\alpha+\beta)x^2y
+(-a+\alpha+a\beta) xyx+(-1+a\alpha+a\beta)yx^2+\\
a(a-\alpha-\beta) &xy^2+a(1-a\alpha-\beta) yxy
+a(1-\alpha-a\beta)y^2x+a^2(-1+\alpha+\beta)y^3
=0.
\end{aligned}
$$
If $1-\alpha-\beta=0$, then the  two relations above
become
$$\begin{aligned}
(1-a) x^2y&
+\alpha(a-1) xyx+\beta(a-1) yx^2\\
&+\beta(a-1) xy^2+\alpha(a-1) yxy
+(1-a)y^2 x=0\\
\end{aligned}$$
and
$$\begin{aligned}
\beta(1-a)x^2y&
+\alpha(1-a) xyx+(a-1)yx^2\\
&+a(a-1) xy^2+a\alpha(1-a) yxy
+a\beta(1-a)y^2x   =0.  
\end{aligned}
$$
Since $a\neq 1$, after dividing by $1-a$, we have
$$x^2y-\alpha xyx-\beta yx^2
-\beta xy^2-\alpha yxy
+y^2 x=0$$
and
$$\beta x^2y +\alpha xyx-yx^2
-a xy^2+a\alpha yxy
+a\beta y^2x=0
$$
which are equivalent to
$$\begin{aligned}
(1+\beta)x^2y+(-1-\beta)yx^2
+(-\beta-a)xy^2+\alpha (a-1) yxy
+(1+a\beta)y^2 x&=0\\
\end{aligned}$$
and
$$\begin{aligned}
(a+\beta) x^2y
+\alpha(1-a)xyx+(-a\beta-1)yx^2
+(-a\beta -a) xy^2+(a+a\beta) y^2x&=0.
\end{aligned}
$$
If $\beta\neq -1$, by Lemma \ref{xxlem1.11}, we have
$\deg_G(xyx)=\deg_G(y^2x)$, which implies that
$G$ is cyclic, a contradiction. If $\beta=-1$
(and then $\alpha=2$),
then the above two relations become
$$\begin{aligned}
xy^2-2 yxy +y^2 x&=0,\\
x^2y-2 xyx+ yx^2&=0,
\end{aligned}
$$
so by Lemma \ref{xxlem1.11} $G$ is abelian. Therefore
we have {\bf part (4)}.

If $1-\alpha-\beta\neq 0$, the two relations given at the
beginning of Case 3b are equivalent to
$$\begin{aligned}
\alpha(1-a)x^2y+&(a-1)(1+\alpha-\beta)xyx+\alpha (1-a) yx^2+(1-a)(a-\beta) xy^2\\
+\alpha(a^2-1) & yxy+(1-a)(1-a\beta) y^2x
+a(1-a)(-1+\alpha+\beta) y^3=0,\\
\end{aligned}$$
and
$$\begin{aligned}
(1-a)(1-\alpha-\beta)&x^3+(a-1)(a-\beta)x^2 y +\alpha(1-a^2) xyx+
(1-a)(-1+a\beta) yx^2\\
+a\alpha(a-1) &xy^2+ a(1-a) (1+\alpha-\beta) yxy+ a\alpha (a-1) y^2x=0 .
\end{aligned}
$$
Since $a\neq 1$, we can simplify them to obtain the following two relations
$$\begin{aligned}
\alpha x^2y-(1+\alpha-\beta)xyx+&\alpha yx^2+(a-\beta) xy^2-\alpha(a+1)  yxy\\
 &+(1-a\beta) y^2x+a(-1+\alpha+\beta)y^3=0,
\end{aligned}
$$
and
$$\begin{aligned}
 (1-\alpha-\beta)x^3-(a-\beta)x^2 y +&\alpha(1+a) xyx+(-1+a\beta) yx^2\\
&-a\alpha xy^2
+ a (1+\alpha-\beta) yxy- a\alpha  y^2x=0 .
\end{aligned}
$$

Suppose $a\neq -1$. Since $\alpha (a-1)\neq 0$, the coefficients
of $yxy$ and $y^3$ are nonzero. By Lemma \ref{xxlem1.11},
we have $\deg_G(yxy)=\deg_G(y^3)$. This forces $G$ to be cyclic,
a contradiction. If $a=-1$ and $\beta\neq -1$, a similar argument
leads to a contradiction. If $a=-1=\beta$, we obtain two
relations
$$\begin{aligned}
\alpha x^2 y+(-2-\alpha) xyx+ \alpha yx^2 +(2-\alpha) y^3&=0,\\
(2-\alpha) x^3+ \alpha xy^2+(-2-\alpha) yxy+ \alpha y^2 x&=0.
\end{aligned}
$$
In addition, if $\alpha\neq -2$, then the coefficients of $x^2y$ and $xyx$ are nonzero. 
By Lemma
\ref{xxlem1.11}, $\deg(x^2y)=\deg(xyx)$ which implies that $G$ is abelian, and so we have {\bf part (4)};
 if $\alpha=-2$, the two relations become
$$\begin{aligned}
 x^2 y+ yx^2 -2y^3&=0,\\
-2 x^3+ xy^2+ y^2 x&=0,
\end{aligned}
$$
which is {\bf part (3)}. This completes the proof.
\end{proof}

\begin{proof}[Proof of Theorem \ref{xxthm0.1}]
If $G$ is abelian, then the assertion follows from
Lemma \ref{xxlem1.3}. We now assume that $G$ is non-abelian.
By Proposition \ref{xxpro1.12} there are three cases
(parts (1), (2) and (3) occur). The first case follows from
Proposition \ref{xxpro1.4}, the second case follows
from Proposition \ref{xxpro1.8}, and the third case follows
from Proposition \ref{xxpro1.10}.
\end{proof}

\section{An Example}
\label{xxsec2}

In this short section we provide an example that negatively 
answers the question  mentioned at the end of the introduction.

\begin{example}
\label{xxex2.1}
Consider the algebra $\DD:=\DD(0,1)$, and
let $G$ be the dihedral group 
$$D_{8}=\{e, \rho, \rho^2, \rho^3, r, \rho r, \rho^2 r, \rho^3 r\},$$
where $\rho$ is a rotation of order $4$ and $r$ is a reflection.
Let $g_1=r$ and $g_2=\rho$. Then $\{g_1,g_2\}$ generate the group
$G$. Define the $G$-coaction on $\DD$ by setting $\deg_G u=\rho$
and $\deg_G d=r$. By a computation similar to \cite[Example 7.1]{KKZ3},
one sees that the homological co-determinant of the $G$-coaction is 
$g_1^2 g_2^2=\rho^2$, which is not trivial. In other words, 
the $(\Bbbk G)^{\circ}$-action on $\DD$ has nontrivial 
homological determinant. On the other hand, by Lemma \ref{xxlem2.2}
below, the fixed subring $\DD^{co \; G}$ is 
$\Bbbk [(du)^2, (ud)^2,d^4, u^2]$ which is isomorphic to 
$\Bbbk [x,y,z,t]/(xy-zt^2)$.  
As a consequence, $\DD^{co\;  G}$ is a commutative complete 
intersection (and hence AS Gorenstein), but not AS regular. 
\end{example}

\begin{lemma}
\label{xxlem2.2}
Retain the notation as the above example. Let $\DD_{g}$ be the
$g$-component of $\DD$ for all $g\in D_{8}$.
\begin{enumerate}
\item[(1)]
$\DD_{e}=\DD^{co\; G}=\Bbbk [(du)^2, (ud)^2,d^4, u^2]$.
\item[(2)]
$\DD_{\rho}=u \DD^{co\;  G} \cong \DD^{co\;  G} (-1)$. 
\item[(3)]
$\DD_{\rho^2}=u^2 \DD^{co\;  G} \cong \DD^{co\;  G} (-2)$. 
\item[(4)] 
$\DD_{\rho^3}=u^3 \DD^{co\;  G}+dud \DD^{co\;  G}$.
\item[(5)]
$\DD_{r}=d \DD^{co\;  G}+udu \DD^{co\;  G}$.
\item[(6)]
$\DD_{\rho r}=ud \DD^{co\;  G}+u^2 du \DD^{co\;  G}\cong u \DD_{r}$.
\item[(7)]
$\DD_{\rho^2 r}=u^2 d \DD^{co\;  G}+u^3 du \DD^{co\;  G} \cong u^2 \DD_{r}$.
\item[(8)]
$\DD_{\rho^3 r}=u^3 d\DD^{co\;  G}+du \DD^{co\;  G}$.
\end{enumerate}
\end{lemma}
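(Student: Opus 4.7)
The plan is to exploit the PBW-type basis $\{u^i(du)^jd^k : i,j,k \geq 0\}$ from Lemma~\ref{xxlem1.1}(3), together with the observation that for $\alpha=0,\beta=1$ the defining relations \eqref{E0.0.1}--\eqref{E0.0.2} become $u^2d=du^2$ and $ud^2=d^2u$, so that $u^2$ and $d^2$ are central in $\DD$; this centrality is the main computational tool throughout.

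I will first determine the $G$-degree of each basis monomial. With $\deg_G u=\rho$ and $\deg_G d=r$, the dihedral identity $r\rho=\rho^{-1}r$ gives $(r\rho)^2=e$, so
$$\deg_G\bigl(u^i(du)^jd^k\bigr)=\rho^i(r\rho)^jr^k$$
takes one of the four values $\rho^i$, $\rho^ir$, $\rho^{i-1}r$, $\rho^{i-1}$, depending on the parities of $j$ and $k$. Sorting by the residue of $i$ modulo~$4$ then partitions the basis into eight subsets, one for each $g\in D_8$. In particular, the monomials of $\DD_e$ split into an ``even family" $u^{4m}(du)^{2n}d^{2p}$ and an ``odd family" $u^{4m+1}(du)^{2n+1}d^{2p+1}$; the identity $u(du)^{2n+1}d=(ud)^{2n+2}$ (a direct computation) together with centrality reduces the odd family to products of even-family monomials with powers of $(ud)^2$, so $\DD^{co G}$ is generated by the four commuting elements $u^4$, $d^2$, $(du)^2$, $(ud)^2$. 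A direct calculation using centrality yields the relation $(du)^2(ud)^2=u^4(d^2)^2$, and a Hilbert series comparison confirms this is the only relation, giving the claimed complete-intersection presentation $\Bbbk[x,y,z,t]/(xy-zt^2)$.

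For parts~(2) and~(3), every basis monomial in $\DD_{\rho^k}$ ($k=1,2$) has a $u^k$-prefix, so $\DD_{\rho^k}=u^k\DD^{co G}$ is immediate; the shift isomorphism $\DD_{\rho^k}\cong\DD^{co G}(-k)$ follows because $\DD$ is a domain (Lemma~\ref{xxlem1.1}(1)) and hence left multiplication by $u^k$ is injective. Part~(4) covers the even-family monomials of $\DD_{\rho^3}$ via the $u^3\DD^{co G}$ summand, but the odd-family monomials $u^{4m}(du)^{2n+1}d^{2p+1}$ with $m=0$ require the separate generator $dud$; for these I will prove the inductive identity $dud\cdot(ud)^{2n}d^{2p}=(du)^{2n+1}d^{2p+1}$ from the basic computations $dud\cdot u=(du)^2$ and $d\cdot(ud)^2=(du)^2d$. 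Part~(5) is analogous, using $udu\cdot(du)^{2n}=u(du)^{2n+1}$, $udu\cdot d^{2p}=u(du)d^{2p}$, and $d\cdot(ud)^{2n}d^{2p}=(du)^{2n}d^{2p+1}$. Parts~(6) and~(7) then follow from part~(5) by left multiplication by $u$ and $u^2$, with the stated isomorphisms $\cong u\DD_r$ and $\cong u^2\DD_r$ again coming from the domain property. Finally, part~(8) requires the separate small-degree generator $du$ (of $\DD$-degree~$2$, not captured by $u^3\DD_r$); the two claimed summands are shown to cover $\DD_{\rho^3 r}$ by computing $du\cdot y$ on the even family and $u^3d\cdot y$ on the odd family of $\DD^{co G}$.

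The main obstacle will be the bookkeeping: the two summands displayed in parts~(4)--(8) are generally not disjoint (for instance $dud\cdot(du)^2=u^3d^4$ already lies in $u^3\DD^{co G}$), so the lemma asserts only that the sum equals the component, not that the decomposition is direct. One must therefore verify that every basis monomial of each $\DD_g$ genuinely lies in one of the two specified summands, rather than merely in their joint $\DD^{co G}$-span.
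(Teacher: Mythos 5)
Your proposal is correct and follows essentially the same route as the paper: compute the $G$-degree of each basis monomial $u^i(du)^jd^k$ (the paper's formula \eqref{E2.2.1}, which agrees with your $\rho^i(r\rho)^jr^k$), sort the basis into the eight components according to the parities of $j,k$ and the residue of $i$ modulo $4$, and use the centrality of $u^2$ and $d^2$ in $\DD(0,1)$ together with elementary word identities such as $u(du)^{m}d=(ud)^{m+1}$ to rewrite each family in the claimed form; the paper carries this out explicitly only for parts (1), (2) and (4) and declares the rest similar, so your extra detail (the Hilbert series check of the presentation, and the verification that each basis monomial lands in one of the two displayed summands rather than merely in their span) is harmless surplus. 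One point worth flagging: your generating set for $\DD^{co\; G}$ is $u^4, d^2, (du)^2, (ud)^2$, whereas the lemma as printed lists $u^2, d^4, (du)^2, (ud)^2$. Yours is the correct one: since $\deg_G u^2=\rho^2\neq e$ while $\deg_G d^2=r^2=e$, the element $u^2$ is not coinvariant and $d^2$ is, exactly as the paper's own formula \eqref{E2.2.1} and its proof of part (1) (the condition $i\equiv 0 \bmod 4$ with $j,k$ even) show; the printed statement evidently has the exponents on $u$ and $d$ interchanged, and your relation $(du)^2(ud)^2=u^4(d^2)^2$ recovers the intended presentation $\Bbbk[x,y,z,t]/(xy-zt^2)$.
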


\begin{proof}
By Lemma \ref{xxlem1.1}(3),
every $G$-homogeneous element in $\DD$ is a linear combination 
of monomials of the form $u^i (du)^j d^k$ for some $i,j,k\geq 0$.  
Note that 
$$\deg_G d^k=\begin{cases} r & {\text{ $k$ is odd}}\\
1& {\text{ $k$ is even}}\end{cases}, \quad {\text{and}}\quad
\deg_G (du)^j=\begin{cases} r\rho & {\text{ $j$ is odd}}\\
1& {\text{ $j$ is even}}\end{cases}.$$
This implies that
$$\deg_G (du)^j d^k=\begin{cases} \rho^3 & {\text{ $j$ and $k$ are odd}}\\
r& {\text{ $j$ is even and $k$ is odd}}\\
r\rho &{\text{ $j$ is odd and $k$ is even}}\\
1 & {\text{ $j$ and $k$ are  even.}}
\end{cases}
$$
Hence
\begin{equation}
\label{E2.2.1}\tag{E2.2.1}
\deg_G u^i (du)^j d^k=\begin{cases} \rho^{i+3} & {\text{ $j$ and $k$ are odd}}\\
\rho^i r& {\text{ $j$ is even and $k$ is odd}}\\
\rho^i  r\rho=\rho^{i-1} r &{\text{ $j$ is odd and $k$ is even}}\\
\rho^i & {\text{ $j$ and $k$ are  even.}}
\end{cases}
\end{equation}

(1) It is clear that elements $(du)^2, (ud)^2,d^4, u^2$ are in
the fixed subring. If $u^i (du)^j d^k$ is in $\DD^{co\; G}$, then 
formula \eqref{E2.2.1} shows that
this could happen only when $j$ and $k$ are odd and $i\equiv 1 \mod 4$, 
or when $j$ and $k$ are  even and $i\equiv 0 \mod 4$. The assertion
follows.

(2) If $u^i (du)^j d^k$ is in $\DD_{\rho}$, then formula \eqref{E2.2.1} 
shows that
this could happen only when $j$ and $k$ are odd and $i+3\equiv 1 \mod 4$, 
or when $j$ and $k$ are  even and $i\equiv 1 \mod 4$. In both cases, $i\geq 1$.
Thus $u^i (du)^j d^k\in u\DD$, and, as a consequence, $u^i (du)^j d^k\in u\DD^{co\;  G}$.

(3) The proof is similar to the proof of (1,2).

(4) Continue the computation in the proof of parts (1,2), if 
$\deg_G u^i (du)^j d^k=\rho^3$, then this could  happen only when 
$j$ and $k$ are odd and $i+3\equiv 3 \mod 4$, 
or when $j$ and $k$ are  even and $i\equiv 3 \mod 4$. In the first case,
$i$ could be 0 and $u^i (du)^j d^k\in dud \DD$; and in the second case,
$i$ could be 3 and $u^i (du)^j d^k\in u^3 \DD$. This implies that
$u^i (du)^j d^k\in u^3 \DD^{co\;  G}+dud \DD^{co\;  G}$.

(5-8) The proofs are similar to the proof of (4) and omitted.
\end{proof}


\begin{thebibliography}{10}

%\bibitem[AM]{AM}
%A. Adem and R. J. Milgram,
%``Cohomology of Finite Groups",
%Grundlehren der mathematischen Wissenschaften Vol 309,
%Springer-Verlag, Berlin, 1994.

%\bibitem[ASZ]{ASZ}
%K. Ajitabh, S.P. Smith and J.J. Zhang,
%Auslander-Gorenstein rings, Comm. Algebra
%{\bf 26} (1998), no. 7, 2159--2180.

%\bibitem[AD]{AD} J. Alev and F. Dumas,  Op\'{e}rateurs diff\'{e}rentiels invariants et probl\`{e}me de Noether. (French) [Invariant differential operators and the Noether problem] Studies in Lie theory, 21–50, Progr. Math., 243, Birkhäuser Boston, Boston, MA, 2006.
\bibitem[AP]{AP}
J. Alev and P. Polo,
A rigidity theorem for finite group actions on enveloping algebras of
semisimple Lie algebras,
Adv. Math. {\bf 111} (1995), no. 2, 208--226.

%\bibitem[Al]{Al}
%A. Aljouiee,
%On weak crossed products, Frobenius algebras, and the weak Bruhat ordering,
%J. Algebra {\bf 287}  (2005),  no. 1, 88--102.

%\bibitem[AP]{AP}
%J. Alev and P. Polo, A rigidity theorem for finite group actions
%on enveloping algebras of semisimple Lie algebras, Adv. Math. {\bf
%111} (1995), no. 2, 208--226.

\bibitem[AS]{AS}
 M. Artin and W. F. Schelter,
Graded algebras of global dimension $3$,
Adv. in Math. {\bf 66} (1987), no. 2,
171--216.

%\bibitem[ASt1]{ASt1}
%M. Artin and J. T. Stafford,  Noncommutative graded
%           domains with quadratic growth, Invent. Math. {\bf 122}
%           (1995), no. 2, 231--276.

%\bibitem[ASt2]{ASt2}
%M. Artin and J. T. Stafford, Semiprime graded algebras
%           of dimension two, preprint.

%\bibitem[ATV]{ATV}%ATV1
%M. Artin, J. Tate and M. Van den Bergh,
%Some algebras associated to automorphisms of elliptic
%curves, ``The Grothendieck Festschrift,''
%Vol. I, ed.\ P. Cartier et al., Birkh\"{a}user Boston
%1990, 33-85.

%\bibitem{[ATV2]} M. Artin, J. Tate and M. Van den Bergh,
%         Modules over regular algebras of dimension $3$, Invent. Math.
%         {\bf 106} (1991), no. 2, 335--388.

%\bibitem[AZ]{AZ}
%M. Artin and J. J. Zhang, Noncommutative projective
%schemes, Adv. Math. {\bf 109} (1994), 228-287.

%\bibitem[BM]{BM}
%G. Barou and M.-P. Malliavin,
%Sur la r{\'e}solution injective minimale de l'alg{\`e}bre enveloppante d'une
%alg{\`e}bre de Lie r{\'e}soluble, (French)
%J. Pure Appl. Algebra 37 (1985), no. 1, 1--25.

%\bibitem[BR]{BR} G. Benkart and T. Roby, Down-up algebras, {\it J.
%Algebra} 209 (1998), 305-344. Addendum,  {\it J. Algebra} {\bf
%213}  (1999), no. 1, 378.

\bibitem[BB]{BB} T. Banica and J. Bichon, 
Hopf images and inner faithful representations, Glasg. Math.
J. {\bf 52} (2010), no. 3, 677-703.

\bibitem[BHZ]{BHZ}
Y.-H. Bao, J.-W. He and J.J. Zhang,
\emph{Pertinency of Hopf actions and quotient categories
of Cohen-Macaulay algebras}, (2016) preprint, arXiv:1603.02346

%\bibitem[BHZ2]{BHZ2}
%Y.-H. Bao, J.-W. He and J.J. Zhang,
%\emph{Noncommutative Auslander theorem},
%in preparation (2016).


%\bibitem[Ba]{Ba}
%Y. Bazlov,
%Nichols-Woronowicz algebra model for Schubert calculus on Coxeter groups,
%J. Algebra {\bf 297}  (2006),  no. 2, 372--399.



%\bibitem[BB]{BB}
%Y. Bazlov and A. Berenstein,
%Mystic reflection groups,
%SIGMA Symmetry Integrability Geom. Methods Appl. {\bf 10}
%(2014), Paper 040, 11 pp.

%\bibitem[Be]{Be}
%D.J. Benson, Polynomial invariants of finite groups, London
%Mathematical Society Lecture Note Series, 190. Cambridge
%University Press, Cambridge, 1993.

\bibitem[BR]{BR}
G. Benkart and T. Roby,
Down-up algebras,
J. Algebra {\bf 209} (1998), no. 1, 305-344.

\bibitem[Be]{Be}
G.M. Bergman,
The diamond lemma for ring theory,
Adv. in Math. {\bf 29} (1978), no. 2, 178-218.

%\bibitem[BjB]{BjB}
%A. Bj\"{o}rner and F. Brenti,
%Combinatorics of Coxeter groups,
%Graduate Texts in Mathematics, {\bf 231}, Springer, New York, 2005.

%\bibitem[Bo1]{Bo1}
%R. Bocklandt,
%Graded Calabi Yau algebras of dimension 3,
%J. Pure Appl. Algebra {\bf 212} (2008), no. 1, 14--32.

%\bibitem[BSW]{BSW}
%R. Bocklandt, T. Schedler and M. Wemyss,
%Superpotentials and higher order derivations,
%J. Pure Appl. Algebra {\bf 214} (2010), no. 9, 1501--1522.

%\bibitem[BRR]{BRR}	
%F. Brenti, V. Reiner and and Y. Roichman,
%Alternating subgroups of Coxeter groups,
%J. Combin. Theory Ser. A {\bf 115} (2008), no. 5, 845--877.

%\bibitem[BW]{BW}
%F. Brenti and V. Welker,
%The Veronese construction for formal power series and graded algebras,
%Adv. in Appl. Math., {\bf 42}, no. 4, 545-556.

%\bibitem[BZ]{BZ}
%K.A. Brown and J.J. Zhang,
%Dualising complexes and twisted Hochschild (co)homology for
%Noetherian Hopf algebras,
%J. Algebra, {\bf 320} (2008), no. 5, 1814--1850.

%\bibitem[CA]{CA}
%J. Cameron Atkins, The invariant theory of $k_{-1}[x_1,\cdots,x_n]$
%under permutation reprensentations, Mater Thesis, Wake Forest
%Unibersity, 2012.

%\bibitem[CF]{CF} J. W. Cassels and A. Fr\"{o}hlich, Algebraic Number Theory
%in {\it Proceedings of an Instructional Conference}, Academic Press, New York,
%1967.

%\bibitem[CE]{CE}
%A. Clark and J. Ewing, The realization of polynomial algebras as
%cohomology rings, Pacific J. Math. {\bf 50} (1974), 425--434.

%\bibitem[Co]{Co}
%H.S.M. Coxeter, Discrete groups generated by reflections, Ann. of
%Math. (2) {\bf 35} (1934), no. 3, 588--621.

%\bibitem[CS]{CS}
%T. Cassidy and B Sheldon, Generalizing the notion of Koszul algebra,
%Math. Z. 260 (2008) 93-114.

%\bibitem[CV]{CV}
%T. Cassidy and M. Vancliff, Generalizations of graded Clifford
%algebras and of complete intersections, 2008 preprint.

%\bibitem[CWZ]{CWZ}
%K. Chan, C. Walton and J.J. Zhang,
%Hopf actions and Nakayama automorphisms,
%J. Algebra {\bf 409}  (2014), 26--53.


%\bibitem[CKWZ]{CKWZ}
%K. Chan, E. Kirkman, C. Walton and J.J. Zhang,
%Quantum binary polyhedral groups
%and their actions on quantum planes, ArXiv 1303.7203,
%to appear in J. Reine Angew. Math.
%(Crelle’s Journal).

\bibitem[CKWZ1]{CKWZ1}
K. Chan, E. Kirkman, C. Walton and J.J. Zhang, 
McKay correspondence for semisimple Hopf actions 
on regular graded algebras I, in preparation (2016).

\bibitem[CKWZ2]{CKWZ2}
K. Chan, E. Kirkman, C. Walton and J.J. Zhang, 
McKay correspondence for semisimple Hopf actions 
on regular graded algebras II, in preparation  (2016).

%\bibitem[Ch]{Ch}
%C. Chevalley,
%Invariants of finite groups generated by reflections, Amer. J. Math
%{\bf 77} (1955), 778--782.

%\bibitem[CM]{CM}
%M. Cohen and S. Montgomery,
%Group-graded rings, smash products, and group actions,
%Trans. Amer. Math. Soc. {\bf 282}  (1984),  no. 1, 237--258.


%\bibitem[DK]{DK}
%H. Derksen, G. Kemper, ``Computational Invariant Theory",
%Encyclopedia of Mathematical Sciences 130: Invariant Theoy
%and Algebraic Transformation Groups I,
%Springer-Verlag, Berlin, 2002.

%\bibitem[EW]{EW}
%P. Etingof and C. Walton,
%Semisimple Hopf actions on commutative domains,
%Adv. Math {\bf 251} (2014), 47--61.



%\bibitem[DH]{DH} M. Domokos and P. Heged\"{u}s,
%Noether's bound for polynomial invariants of
%finite groups, Arch. Math. 74 (2000), 161-167.

%\bibitem[EFOS]{EFOS} F. Eshmatov, V. Futorny, S. Ovsienko, and J.F. Schwarz, Noncommutative Noether's problem for complex reflection groups, (2015 preprint) arXiv:1505.05626.
%\bibitem[FHT]{FHT}
%Y. F{\' e}lix, S. Halperin and J.-C. Thomas,
%Elliptic Hopf algebras,
%J. London Math. Soc. (2) {\bf 43} (1991), no. 3, 545--555.

%\bibitem[FS]{FS}
%S. Fomin and R.P. Stanley,
%Schubert Polynomials and the Nilcoxeter Algebra,
%Adv. Math {\bf 103} (1994), 196--207.

%\bibitem[GR]{GR}
%J. Gaddis and D. Rogalski,
%in preparation.

%\bibitem[GN]{GN}
%C. Galindo and S. Natale,
%Simple Hopf algebras and deformations of finite groups,
%Math. Res. Lett. {\bf 14}  (2007),  no. 6, 943--954.

%\bibitem[Go]{Go}
%M. G\"{o}bel,
%Computing bases for rings of permutation-invariant polynomials,
%J. Symbolic Comput. {\bf 19} (1995), 285-291.

%\bibitem[G1]{G1}
%N.L. Gordeev,
%Invariants of linear groups generated by matrices with two
%nonidentity eigenvalues,
%Zap. Nauchn. Sem. Leningrad. Otdel. Mat. Inst. Steklov.
%(LOMI)  {\bf 114} (1982),
%120-130; English translation in J. Soviet Math. {\bf 27} (1984), no. 4.

%\bibitem[G2]{G2}
%N.L. Gordeev,
%Finite linear groups whose algebra of invariants is a complete intersection
%(Russian), Izv. Akad. Nauk SSR Ser. Mat. {\bf 50} (1986), no. 2, 343-392.
%(English translation: Math. USSR-Inv. {\bf 28}(1987), no. 2, 335-379.)

%\bibitem[Gu]{Gu}
% T.H. Gulliksen,
%A homological characterization of local complete intersections,
%Compositio Math. 23 (1971), 251-255.

%\bibitem[He]{He}
%H. Helfgott, Growth in groups: ideas and perspectives,
%Bull. Amer. Math. Soc. (N.S.) {\bf 52} (2015), no. 3, 357--413

%\bibitem[HW]{HW} G. H. Hardy and E. M Wright, ``An Introduction to the
%Theory of Numbers", Fourth Edition, Oxford University Press,
%London, 1960.

%\bibitem[JiZ]{JiZ}
%N. Jing and J.J. Zhang, On the trace of graded automorphisms, J.
%Algebra {\bf 189} (1997), no. 2, 353--376.

%\bibitem[JiZ2]{JiZ2}
%N. Jing and J.J. Zhang, Gorensteinness of invariant subrings of
%quantum algebras, J. Algebra {\bf 221} (1999), no. 2, 669--691.

%\bibitem[{\bf JW}]{JW} D. A. Jordan and I.E. Wells, Invariants for
%automorphisms of certain iterated skew polynomial
%rings, {\it Proc. Edinburgh Math. Soc.} {\bf 39} (1996), 461-472.

%\bibitem[J1]{J1}
%P. J{\o}rgensen,
%Non-commutative graded homological identities,
%J. London Math. Soc. (2) {\bf 57}  (1998),  no. 2, 336--350.

%\bibitem[J2]{J2}
%P. J{\o}rgensen,
%Local cohomology for non-commutative graded algebras,
%Comm. Algebra {\bf 25}  (1997),  no. 2, 575--591.


%\bibitem[JoZ]{JoZ}
%P. J{\o}rgensen and J.J. Zhang, Gourmet's guide to Gorensteinness,
%Adv. Math. {\bf 151} (2000), no. 2, 313--345.

%\bibitem[Jo]{Jo}
%A. Joseph, Coxeter structure and finite group action, Alg{\`e}bre
%non commutative, groupes quantiques et invariants (Reims, 1995),
%185--219, S{\'e}min. Congr., 2, Soc. Math. France, Paris, 1997.

%\bibitem[GW]{GW}
%K.R. Goodearl and  R.B. Warfield Jr.,
%``An introduction to noncommutative Noetherian rings'',
%Second edition. London Mathematical Society Student Texts, 61.
%Cambridge University Press, Cambridge, 2004.

%\bibitem[KW]{KW} V. Kac and K. Watanabe, Finite linear groups whose ring
%of invariants is a complete intersection, Bull, Amer. Math. Soc. (N.S.) {\bf 6}
%(1982), no. 2, 221-223.

%\bibitem[KM]{KM}
%A.N. Kirillov and T. Maeno,
%Nichols-Woronowicz model of coinvariant algebra of complex reflection groups,
%J. Pure Appl. Algebra  {\bf 214}  (2010),  no. 4, 402--409.

\bibitem[Ki]{Ki}
E. Kirkman, Invariant theory of Artin-Schelter regular algebras:
a survey, preprint, (2015), arXiv:1506.06121v1.

\bibitem[KK]{KK}
E. Kirkman and J. Kuzmanovich,
Fixed subrings of Noetherian graded regular rings,
J. Algebra {\bf 288} (2005), no. 2, 463--484.


\bibitem[KKZ1]{KKZ1} E. Kirkman, J. Kuzmanovich, and J.J. Zhang,
Rigidity of graded regular algebras,
Trans. Amer. Math. Soc. {\bf 360} (2008), no. 12, 6331--6369.

\bibitem[KKZ2]{KKZ2}
E. Kirkman, J. Kuzmanovich and J.J. Zhang, A
Shephard-Todd-Chevalley theorem for noncommutative regular
algebras, Algebr. Represent. Theory {\bf 13} (2010), no. 2, 127--158

\bibitem[KKZ3]{KKZ3}
E. Kirkman, J. Kuzmanovich and J.J. Zhang, Gorenstein subrings of invariants
under Hopf algebra actions, J. Algebra {\bf 322} (2009), no. 10, 3640--3669.

%\bibitem[KKZ4]{KKZ4}
%E.Kirkman, J. Kuzmanovich, and J.J. Zhang,
%Noncommutative complete intersections,
%J. Algebra {\bf 429} (2015), 253--286.

\bibitem[KKZ4]{KKZ4}
E.Kirkman, J. Kuzmanovich, and J.J. Zhang,
Invariant theory of finite group actions on down-up algebras,
Transform. Groups {\bf 20} (2015), no. 1, 113--165.

\bibitem[KKZ5]{KKZ5}
E.Kirkman, J. Kuzmanovich, and J.J. Zhang,
Nakayama automorphism and rigidity of dual reflection group
coactions, (2016) arXiv:1605.01958.


%\bibitem[KKZ7]{KKZ7}
%E.Kirkman, J. Kuzmanovich, and J.J. Zhang,
%Dual reflection groups: Examples, in preparation.

\bibitem[KMP]{KMP} E. Kirkman, I. Musson, D. Passman, Noetherian
down-up algebras, {\it Proc. Amer. Math. Soc.} {\bf 127} (1999),
3161-3167.

%\bibitem[KL]{KL}
%G. Krause and T. Lenagan, ``Growth of Algebras and
%Gelfand-Kirillov Dimension, revised edition, Graduate Studeis in
%Mathematics, Vol. 22, AMS, Providence, 2000.

%\bibitem[Le]{Le}
%T. Levasseur,
%Some properties of noncommutative regular rings,
%Glasgow Math. J. {\bf 34} (1992), 277-300.

%\bibitem[LMZ]{LMZ}
%J.-F. L\"u, X.-F. Mao and J.J. Zhang,
%Nakayama automorphism and applications,
%Trans. Amer. Math. Soc., (to appear), preprint
%arXiv:1408.5761.

%\bibitem[LMZ2]{LMZ2}
%J.-F. L\"u, X.-F. Mao and J.J. Zhang,
%Nakayama automorphism of a class of graded algebras,
%\emph{Israel Journal of Mathematics}, (to appear), preprint
%arXiv:1512.00581.


%\bibitem[MR]{MR}
%J. C. McConnell and J. C . Robson, ``Noncommutative Noetherian
%Rings,'' Wiley, Chichester, 1987.

%\bibitem[MS]{MS}
%A. Milinski and H.-J. Schneider,
%Pointed indecomposable Hopf algebras over Coxeter groups,
%New trends in Hopf algebra theory (La Falda, 1999),  215–236,
%Contemp. Math., {\bf 267}, Amer. Math. Soc., Providence, RI, 2000.


%\bibitem[Mon]{Mon}
%S. Montgomery,
%``Hopf algebras and their actions on rings'',
%         CBMS Regional Conference Series in
%Mathematics, {\bf 82},
 %        Providence, RI, 1993.
%\bibitem[Mo]{Mo} R. Mollin, ``Algebraic Number Theory".
%Chapman \& Hall/CRC, Boca Raton, FL, 1999.

%\bibitem[Mon]{Mon}
%S. Montgomery, ``Fixed rings of finite automorphism groups of
%associative rings'', Lecture Notes in Mathematics, 818. Springer,
%Berlin, 1980

%\bibitem[N1]{N1} H. Nakajima, Rings of invariants of finite groups which
%are hypersurfaces, J. Algebra {\bf 80} (1983), 279-294.

%\bibitem[N2]{N2} H. Nakajima, Rings of invariants of finite groups which
%are hypersurfaces II, Adv. Math.

%\bibitem[N3]{N3}
%H. Nakajima, Quotient singularities which are complete intersections,
%Manuscripta Math. {\bf 48} (1984), no.1-3, 163-187.

%\bibitem[N4]{N4} H. Nakajima,
%Quotient complete intersections of affine spaces by finite linear groups,
%Nagoya Math J. {\bf 98} (1985), 1-36.

%\bibitem[NW]{NW}
%H. Nakajima and K. Watanabe,
%``Complete intersections" (Acireale, 1983), 102-120,
%Lecture Notes in Math. 1092, Springer, Berlin, 1984.

%\bibitem[Ne]{Ne} M. Neusel, ``Invariant Theory",
%Student Mathematical Library Vol. 36, American
%Mathematical Society, Providence, R.I, 2007.

%\bibitem[Ni]{Ni}
%D. Nikshych,
%$K_0$-rings and twisting of finite dimensional semisimple Hopf algebras,
%Commun. Algebra {\bf 26} (1998), 321--342; Commun. Algebra {\bf 26} (1998),
%2019.

%\bibitem[No]{No}
%E. Noether, Der Endlichkeitssatz der Invarianten endlicher Gruppen,
%Math. Ann. 77 (1916), 89-92.

%\bibitem[NV]{NV}
%C. N\u ast\u asescu, F. van Oystaeyen,
%``Graded ring theory'',
%         North-Holland Mathematical Library, 28.
% North-Holland Publishing
%         Co., Amsterdam-New York, 1982.

%\bibitem[Pr]{Pr}
%S.B. Priddy,
%Koszul resolutions,
%Trans. Amer. Math. Soc. {\bf 152} (1970) 39--60.


%\bibitem[RRZ1]{RRZ1}
%Z. Reichstein, D. Rogalski and J.J. Zhang,
%Projectively simple rings,
%Adv. in Math, {\bf 203} (2006), 365--407.

%\bibitem[RRZ2]{RRZ2}
%M. Reyes, D. Rogalski and J.J. Zhang,
%Skew Calabi-Yau algebras and homological identities,
%Adv. Math. {\bf 264} (2014), 308--354.

%\bibitem[RRZ3]{RRZ3}
%M. Reyes, D. Rogalski and J.J. Zhang,
%Skew Calabi-Yau triangulated categories and Frobenius Ext-algebras,
%Trans. Amer. Math. Soc., (to appear), preprint (2014)
%arXiv:1408.0536.


%\bibitem[Ro]{Ro}
%A. Rotbart,
%Generator sets for the alternating group,
%S{\'e}m. Lothar. Combin. {\bf 65} (2010/12), Art. B65b, 16 pp.


%\bibitem[S]{S}
%B.J. Schmid, Finite groups and invariant theory, in;
%P. Dubreil, M.-P. Malliavin, eds.,
%Topics in Invariant Theory, Lecture Notes Math. 1478, Springer-Verlag,
%Berlin, 1991.

\bibitem[ST]{ST}
G.C. Shephard and J.A. Todd, Finite unitary reflection groups,
Canadian J. Math. {\bf 6}, (1954). 274--304.


%\bibitem[SW]{SW}
%L. W. Small and R. B. Warfield Jr.,
%Prime affine algebras
%        of Gelfand-Kirillov dimension one. J. Algebra
% {\bf 91} (1984),
%        no. 2, 386--389.


%\bibitem[SSW]{SSW}
%L. W. Small, J.T. Stafford and R. B. Warfield Jr.,
%Affine algebras of Gelfand-Kirillov dimension one are
%PI,
%Math. Proc. Cambridge Philos. Soc. {\bf 97} (1985),
%no. 3, 407--414.

%\bibitem[Sm1]{Sm1}
%S.P. Smith, Can the Weyl algebra be a fixed ring? Proc. Amer.
%Math. Soc. {\bf 107} (1989), no. 3, 587--589.

%\bibitem[Sm1]{Sm1}
%S.P. Smith, Some finite-dimensional algebras related to elliptic
%curves, Representation theory of algebras and related topics
%(Mexico City, 1994), 315--348, CMS Conf. Proc., 19, AMS,
%Providence, RI, 1996.

%\bibitem[SS]{SS}
%S. P. Smith and J. T. Stafford,
%Regularity of the
%four-dimensional Sklyanin algebra,
%Compositio Math. {\bf 83} (1992), no. 3, 259--289.

%\bibitem[SZ]{SZ}
%J. T. Stafford and J. J. Zhang,
%Homological properties of (graded) Noetherian PI
%rings,
%J. Algebra {\bf 168} (1994), no. 3, 988--1026.

%\bibitem[St1]{St1}
%R.P. Stanley,
%Relative invariants of finite
%groups generated by pseudo-reflections, J. Algebra {\bf 49} (1977),
%134-148.

%\bibitem[S1]{S1}
%R.P. Stanley,
%Invariants of finite groups and their applications
%to combinatorics, Bulletin of the AMS, {\bf 1} (1979), 475-511.

%\bibitem[S2]{S2}
%R.P. Stanley,
%Hilbert functions of graded algebras,
%Adv. Math. {\bf 28} (1978), 57-83.



%\bibitem[SV]{SV}
%D. Stefan and C. Vay,
%The cohomology ring of the 12-dimensional Fomin-Kirillov algebra,
%Adv. Math. {\bf 291} (2016), 584–620.

\bibitem[Ste]{Ste}
D.R. Stephenson,
Artin-Schelter regular algebras of global dimension three,
J. Algebra {\bf 183} (1996), no. 1, 55--73.

%\bibitem[StZ]{StZ}
%D.R. Stephenson and J.J. Zhang, Growth of graded Noetherian rings,
%Proc. Amer. Math. Soc. {\bf 125} (1997), no. 6, 1593--1605.

%\bibitem[St]{St} B. Sturmfels, ``Algorithms in Invariant Theory", Texts and Monographs
%in Symbolic Computation, Springer-Verlag , New York, 1993.

%\bibitem[TV]{TV}
%J. Tate and M. Van den Bergh,
%Homological properties of Sklyanin algebras,
%Invent. Math. {\bf 124} (1996), no. 1-3, 619--647.

%\bibitem[VdB1]{VdB1}
%M. Van den Bergh,
%Existence theorems for dualizing complexes over noncommutative
%graded and filtered rings, J. Algebra {\bf 195} (1997), 662--679.

%\bibitem[VdB2]{VdB2}
%M. Van den Bergh,
%Calabi-Yau algebras and superpotentials,
%Selecta Math. (N.S.) {\bf 21} (2015), no. 2, 555--603.

%\bibitem[VdB3]{VdB3}
%M. Van den Bergh,
%Non-commutative crepant resolutions,
%The legacy of Niels Henrik Abel, 749-770, Springer, Berlin,
%{\bf 2004}.

%\bibitem[W1]{W1}
%K. Watanabe, Certain invariant subrings are Gorenstein, I, Osaka J. Math. 11
%(1974), 1-8.

%\bibitem[W2]{W2}
%K. Watanabe, Certain invariant subrings are Gorenstein, II, Osaka J. Math. 11
%(1974), 379-388.

%\bibitem[WR]{WR} K. Watanabe and D. Rotillon, Invariant
%subrings of $\mathbb{C}[x,y,z]$ which are complete
%intersections, Manuscripta Math. {\bf 39} (1982),
%339-357.

%\bibitem[W3]{W3}
%K. Watanabe, Invariant subrings which are complete intersections I
%(Invariant subrings of finite abelian groups),
%Nagoya Math. J. {\bf 77} (1980), 89-98.

%\bibitem[WZ]{WZ}
%Q.-S. Wu and J.J. Zhang,
%Noetherian PI Hopf algebras are Gorenstein,
%Trans. Amer. Math. Soc. {\bf 355} (2003), no. 3, 1043--1066


%\bibitem[WZ4]{WZ4}
%Q.-S. Wu and J.J. Zhang,
%Dualizing complexes over noncommutative local rings,
%J. Algebra, {\bf 239} (2001), 513-548.


%\bibitem[Ye1]{Ye1}
%A. Yekutieli,
%         Dualizing complexes over noncommutative
%graded algebras,
%         J. Algebra {\bf 153} (1992), 41-84.

%\bibitem[Ye2]{Ye2}
%A. Yekutieli,
%The residue complex of a
%         noncommutative graded algebra, J. Algebra
%{\bf 186}
 %        (1996), 522-543.

%\bibitem[YZ1]{YZ1}
%A. Yekutieli and J.J. Zhang,
%Residual complex over
%noncommutative rings, J. Algebra {\bf 259}
%(2003), no. 2,
%451--493.

%\bibitem[YZ]{YZ}
%A. Yekutieli and J.J. Zhang,
%Rings with Auslander dualizing complexes,
%J. Algebra {\bf 213} (1999), no. 1, 1--51.


\end{thebibliography}
\end{document}